\documentclass[11pt]{article}

\usepackage{amsmath,amsfonts,amssymb,amsthm}
\usepackage{graphicx}
\usepackage{subcaption}
\usepackage{epstopdf}
\usepackage{algorithmic}
\usepackage{enumerate}
\usepackage{amscd,latexsym,authblk,url}
\usepackage{multirow}
\usepackage{booktabs}
\usepackage{setspace}

\newtheorem{theorem}{Theorem}[section]
\newtheorem{proposition}[theorem]{Proposition}

\newtheorem{definition}[theorem]{Definition}
\newtheorem{lemma}[theorem]{Lemma}
\newtheorem{example}[theorem]{Example}
\newtheorem*{claim}{Claim}

\title{Metric representations by minimal graphs\thanks{
Partially supported by grants 
PID2021-123278OB-I00 and
PID2023-150725NB-I00, both funded by the Spanish Ministry of Science and Innovation MICIU/AEI/10.13039/501100011033, ``ERDF A way of making Europe''and Gen. Cat. DGR SGR2023 (2021SGR00266).}}

\author[1]{V\'ictor Franco-Sánchez}
\author[2]{Merc\`e Mora}
\author[3]{Mar\'ia Luz Puertas}

\affil[1]{Universitat Polit\`ecnica de Catalunya

\tt{victor.franco.sanchez@upc.edu}}

\affil[2]{Universitat Polit\`ecnica de Catalunya

  \tt{merce.mora@upc.edu}}

\affil[3]{Universidad de Almer\'ia 

\tt{mpuertas@ual.es}}

\begin{document}
\date{}
\maketitle

\begin{abstract}
A resolving set in a graph $G$ is a vertex subset $W= \{\omega^1, \dots, \omega^n\} \subseteq V(G)$ such that each $u \in V(G)$ can be uniquely identified by the vector $r(u \vert W) = (d(u,\omega^1), \dots, d(u,\omega^n))$ of metric coordinates of $u$ with respect to $W$.
The reverse problem of identifying the vector sets that are a set of coordinates of some graph provides the concept of realizable vector set $S \subset \mathbb{Z}^n$ by a pair $(G, W)$ meaning that $S=\{ r(u\vert W)\colon u\in V(G)\}$ with $W$ a resolving set of the graph $G$. Here we focus on the minimality of the realizations of vector sets with respect to their edge sets. On the one hand, we study conditions under which it is possible to remove an edge from the graph and keep the realizability condition. This provides a method for finding minimal realizations, as well as allowing us to characterize uniquely realizable vector sets. On the other hand, we prove that the decision problem of realizing a vector set by a graph with a given number of edges is an NP-complete problem. Finally, we characterize the vector sets that are realizable by a tree and, furthermore, we study the case in which such a realization is the only one.
\end{abstract}

{\textbf{Keywords:}}
Graphs, Resolving sets, Metric coordinates, Computational \indent Complexity.

{\textbf{MSC Codes:}}
05C05, {05C12}, 68R10

\section{Introduction}\label{Sec:intro}

 The seminal paper of Harary and Melter~\cite{Harary1976} about metric dimension in graphs begins by recalling the definition of a basis of a metric space, taken from Blumenthal's classic book~\cite{Blumenthal1953}. Let $S=\{v_1, \dots, v_n\}$ be an ordered set of points in a metric space $M$. The $S$-coordinates of a point $u$ are given by the distance vector $[d(u,v_1), \dots d(u,v_n)]$. The set $S$ is called a metric basis for $M$ if (1) no two  points of $M$ have the same coordinates and (2) there is no smaller set satisfying (1). The authors then state that similar definitions apply to a connected graph, and the concepts of metric coordinates, metric basis and metric dimension in graphs arises. 

 Blumenthal's book, as stated at the beginning of Chapter II, deals with the geometric properties in which the distance between two points is invariant, and what the author calls \emph{distance geometry}. In other words, the focus is on the distances between points in a space and what properties can be deduced from them. This viewpoint has been extensively studied in metric spaces and also in graphs. 
 
 The problem of studying the properties of a connected graph from the distances between its vertices was first introduced in~\cite{Hakimi1965} and since then has been approached in several ways. For instance, by using the distance matrix, in~\cite{Smoleskii1962} the author considers the problem of defining a tree through a set of distances between its leaves and proved that if such a tree exists, then it is unique. Following theses ideas, in~\cite{Zaretskii65} the author establishes a necessary and sufficient condition for the existence of such trees and provides an algorithm for their construction. The same problem has been studied using other vertex sets, different from the set of leaves of a tree, but that can represent, in some sense, the ``extremities'' of a graph. Thus, in~\cite{Caceres2025} the authors investigate how to reconstruct a graph from the distances between its boundary vertices.

Our approach to this problem is inspired in the Blumenthal's original ideas and the so-called \emph{distance geometry problem}~\cite{Liberti2017}: ``is there a set of points in a vector space of given dimension which yields a given subset pairwise distances, each with its assigned point name pair?'' In this paper, we study properties of graphs that can be deduced from the metric coordinates of the vertices, a similar starting point to that of~\cite{Feit2018}. 

 Given a graph $G$, a {\it resolving set} is a subset of vertices $W \subseteq V(G)$ such that for any pair of distinct vertices $u, v \in V(G)$, there exists $\omega \in W$ that {\it resolves} the pair $u,v$, that is, $d(u,\omega) \neq d(v,\omega)$  (see \cite{Harary1976, Slater1975}). So, each vertex $u \in V(G)$ can be uniquely identified by its {\it metric coordinates} $r(u \vert W) = (d(u,\omega^1), \dots, d(u,\omega^n))$ with respect to the set $W = \{\omega^1, \dots, \omega^n\}$. The problem of identifying sets that are in fact sets of metric coordinates of some graph gives rise to the concept of realizability. A finite set $S \subset \mathbb{Z}^n$ is said to be {\it realizable} by a pair $(G, W)$, where $G$ is a graph and $W$ is a resolving set, if $S=\{ r(u\vert W)\colon u\in V(G)\}$. Realizable sets were characterized in~\cite{MoraPuertas2023} and the authors also studied the uniqueness of the realizations in the case $n=2$.

In this work, we continue this line of research. In Section~\ref{sec:minimal}, we explore under what conditions one realization can be derived from another one by adding or removing some edges, which leads us to consider maximal and minimal realizations and characterize the sets such that the realization is unique. 
Moreover, in Section~\ref{sec:complexity} we study the complexity of a decision problem related with the minimality of the metric realizations. 
Specifically, the decision problem of, given a set of distance vectors and a constant $k$,
determine if it is realizable by a graph with at most $k$ edges.
Finally, in Section~\ref{sec:trees}, we focus on realizations by trees, characterizing the coordinate sets that are realizable by trees, as well as the uniqueness of such realizations.

We include here some definitions and known results that will be useful in the rest of the paper. All graphs considered in this paper are finite and connected. The following are standard notions in graph theory. Given a graph $G$, with vertex set $V(G)$ and edge set $E(G)$, the neighborhood of a vertex $u\in V(G)$ is $N_G(u)=\{v\in V(G)\colon uv\in E(G)\}$. If $u,v$ are distinct vertices of $G$ such that $uv\notin E(G)$, we define $G+uv$ as the graph with vertex set $V(G+uv)=V(G)$ and edge set $E(G+uv)=E(G)\cup \{ uv\}$. Moreover, if $uv\in E(G)$, we define $G-uv$ as the graph with vertex set $V(G-uv)=V(G)$ and edge set $E(G-uv)=E(G)\setminus \{ uv\}$.
For arbitrary graphs $G$ and $H$, a map $f:V(H)\mapsto V(G)$ is an {\it isometric embedding of $H$ into $G$} if $d_G(f(u),f(v))=d_H(u,v)$, for any $u,v\in V(H)$.

\begin{definition}\cite{Chartrand2000,Harary1976,Slater1975}
Given a graph $G$, the metric representation of a vertex $u\in V(G)$ with respect to an ordered vertex subset $W=\{\omega^1, \dots , \omega^n\}\subseteq V(G)$ is $r(u\vert W)=(d(x,\omega^1), \dots d(x,\omega^n))$. A vertex subset $W$ of a graph $G$ is a resolving set if $r(u\vert W)\neq r(v\vert W)$, for each pair of vertices $u,v\in V(G)$, with $u\neq v$. 

\end{definition}

\begin{definition}\cite{MoraPuertas2023}
A subset $S\subset \mathbb{Z}^n$ is realizable if there exists a graph $G$  and a resolving set $W$ of $G$ such that $S=\{r(u\vert W)\colon u\in V(G)\}$.
In such a case, we say that $(G,W)$ is a \emph{realization} of $S$.
\end{definition}

\begin{theorem}\label{theo:realizable}
\cite{MoraPuertas2023}
A subset $S\subset \mathbb{Z}^n$ is realizable if and only if the following properties hold.
\begin{enumerate}
    \item For every $x\in S$ and every $i\in [n]$,  $x_i\geq 0$. Moreover, $x$ has at most one coordinate equal to zero.
    \item For every $i\in [n]$, there exists exactly one element $x\in S$ such that $x_i=0$.
    \item If $x\in S$ and $x_i > 0$ for some $i\in [n]$, then there exists $y\in S$ satisfying $y_i=x_i-1$ and $\max_{j\in [n]} \{\vert y_j-x_j \vert \}\leq 1$.
\end{enumerate}
\end{theorem}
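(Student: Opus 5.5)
Both directions are argued separately: necessity comes straight from the triangle inequality, and sufficiency by building an explicit graph on vertex set $S$.

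\emph{Necessity.} Let $(G,W)$ realize $S$ with $W=\{\omega^1,\dots,\omega^n\}$, and let $x=r(u\vert W)$.
\begin{enumerate}
\item Distances are nonnegative. Moreover $x_i=0$ forces $u=\omega^i$. Since the $\omega^i$ are distinct, at most one coordinate of $x$ vanishes.
\item The unique vertex with $i$-th coordinate zero is $\omega^i$. Its representation is unique because $W$ is resolving.
\item If $x_i>0$, pick a neighbour $v$ of $u$ on a shortest $u$--$\omega^i$ path, and set $y=r(v\vert W)$. Then $y_i=x_i-1$. For every $j$ we have $\vert d(u,\omega^j)-d(v,\omega^j)\vert\le 1$ because $uv$ is an edge, so $\max_j\vert y_j-x_j\vert\le 1$.
\end{enumerate}

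\emph{Sufficiency.} Given $S$ satisfying the three properties, build the graph $G$ with vertex set $S$ and an edge $xy$ whenever $x\neq y$ and $\max_j\vert x_j-y_j\vert\le 1$. This is the "maximal'' candidate. For each $i$, let $\omega^i$ be the unique element with $\omega^i_i=0$ given by property 2. By property 1 these elements are pairwise distinct, since an element has at most one zero coordinate. Set $W=\{\omega^1,\dots,\omega^n\}$.

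The key claim is that $d_G(x,\omega^i)=x_i$ for all $x\in S$ and all $i$.
\begin{itemize}
\item Upper bound, by induction on $x_i$. The case $x_i=0$ means $x=\omega^i$. If $x_i>0$, property 3 gives a neighbour $y$ of $x$ with $y_i=x_i-1$. Iterating, we reach $\omega^i$ in exactly $x_i$ steps. This also shows $G$ is connected.
\item Lower bound. Along any edge the $i$-th coordinate changes by at most one. Any path from $x$ to $\omega^i$ must bring the $i$-th coordinate from $x_i$ down to $0$, so it has length at least $x_i$.
\end{itemize}
Hence $r(x\vert W)=x$ for every $x\in S$. Distinct elements of $S$ therefore get distinct representations, so $W$ is resolving, and $S=\{r(u\vert W)\}$.

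The main point to be careful about is the lower bound. It is exactly where the choice of edges only between vectors at $\ell_\infty$-distance at most one is needed. The other subtle point is checking that the $\omega^i$ are distinct, so that $W$ is a genuine $n$-element ordered set; this is precisely what the "at most one zero'' clause of property 1 guarantees.
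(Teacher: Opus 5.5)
Your proof is correct and takes essentially the same route as the cited source. The graph you build for sufficiency is exactly the canonical realization $(\widehat{G},\widehat{W})$ of Definition~\ref{def:canonical}: for distinct integer vectors, $\max_j|x_j-y_j|\le 1$ is the same as $\max_j|x_j-y_j|=1$. In it, property 3 gives $d(x,\omega^i)\le x_i$, and the fact that each coordinate changes by at most one along an edge gives the matching lower bound. The only assumption you leave implicit is that $S$ is finite, as the paper's definition of realizability requires, so that this graph is a finite connected graph.
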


\begin{definition}\label{def:equivalent}
\cite{MoraPuertas2023}
Two realizations $(G,W)$ and $(G',W')$ of a set $S\subset \mathbb{Z}^n$ are equivalent if the map $f\colon V(G)\to V(G')$ satisfying $r(u\vert W)=r(f(u)\vert W')$ is a graph isomorphism.
\end{definition}

\begin{definition}\label{def:canonical}
\cite{MoraPuertas2023}
Let $S\subset \mathbb{Z}^n$ be a realizable set. The \emph{canonical realization} of $S$ is $(\widehat{G},\widehat{W})$,
where
$$V(\widehat{G})=S, \ \ E(\widehat{G})=\{ xy\colon x,y\in S \textrm{ and } \max_{i\in [n]} \vert y_i-x_i\vert =1\},$$
$$\widehat{W}=\{x\in S\colon x_i=0, {\rm \ for\ some\ } i\in [n]\}.$$
\end{definition}

The canonical realization plays a central role among the realizations of $S$, since all of them can be included into the canonical one, in the following way.

\begin{proposition}\label{prop:embeding}\cite{MoraPuertas2023}
    Let $(G,W)$ be a realization of $S\subset \mathbb{Z}^n$. Then, the map $\phi\colon V(G)\to V(\widehat{G})$ defined by $\phi(u)=r(u\vert W)$ is a bijection. Moreover, the (not necessarily induced) subgraph $G^*$ of $\widehat{G}$ defined by $V(G^*)=V(\widehat{G})=S$ and $E(G^*)=\{\phi(u)\phi(v)\colon uv\in E(G)\}$, provides  a realization $(G^*,\widehat{W})$ of $S$ equivalent to $(G,W)$.
\end{proposition}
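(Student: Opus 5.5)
The plan has three parts: show that $\phi$ is a bijection onto $S$, check that the edges of $G^*$ lie in $\widehat{G}$, and then verify that $(G^*,\widehat{W})$ realizes $S$ via an isomorphism to $G$.

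\emph{Bijection.} Since $(G,W)$ realizes $S$, we have $S=\{r(u\vert W)\colon u\in V(G)\}=V(\widehat{G})$, so $\phi$ is surjective. Because $W$ is resolving, distinct vertices have distinct representations, so $\phi$ is injective.

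\emph{Edges of $G^*$ lie in $\widehat{G}$.} Take $uv\in E(G)$. For every $\omega^i\in W$ the triangle inequality gives $\vert d(u,\omega^i)-d(v,\omega^i)\vert\le 1$. By injectivity $\phi(u)\neq\phi(v)$, so some coordinate differs, and therefore $\max_i\vert\phi(u)_i-\phi(v)_i\vert=1$. Hence $\phi(u)\phi(v)\in E(\widehat{G})$, and $G^*$ is a spanning subgraph of $\widehat{G}$.

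\emph{Realization and equivalence.} By construction $\phi$ is a graph isomorphism from $G$ to $G^*$. Isomorphisms preserve distances, so $d_{G^*}(\phi(u),\phi(v))=d_G(u,v)$. Write $W=\{\omega^1,\dots,\omega^n\}$. By property 2 of Theorem~\ref{theo:realizable}, the unique element of $S$ with $i$-th coordinate $0$ is $\phi(\omega^i)$. Thus $\widehat{W}=\phi(W)$, and we order it as $(\phi(\omega^1),\dots,\phi(\omega^n))$. Then
\[
r(\phi(u)\vert \widehat{W})_i=d_{G^*}(\phi(u),\phi(\omega^i))=d_G(u,\omega^i)=\phi(u)_i,
\]
so every vertex $x\in S$ of $G^*$ has representation exactly $x$. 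Consequently $\widehat{W}$ is resolving in $G^*$, the representation set is $S$, and $(G^*,\widehat{W})$ is a realization of $S$. The map $f$ of Definition~\ref{def:equivalent}, determined by $r(u\vert W)=r(f(u)\vert\widehat{W})$, is precisely $\phi$, which is an isomorphism, so the two realizations are equivalent.

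There is no real obstacle here. The only point needing care is the ordering of $\widehat{W}$, which must be indexed so that its $i$-th vertex is the one with zero $i$-th coordinate; this is exactly $\phi(\omega^i)$.
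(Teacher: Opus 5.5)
Your proof is correct and complete. Surjectivity follows from $S=\{r(u\vert W)\colon u\in V(G)\}$, and injectivity from $W$ being resolving. The triangle inequality together with injectivity forces $\max_{i}\vert\phi(u)_i-\phi(v)_i\vert=1$ on edges. Finally, you transport distances along the isomorphism $\phi\colon G\to G^*$, taking $\widehat{W}=\phi(W)$ ordered by the position of the zero coordinate. You correctly compute distances in $G^*$ rather than in $\widehat{G}$, where they need not be preserved.

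The paper does not prove this proposition itself. It is quoted from earlier work of Mora and Puertas and only remarked to be ``obviously'' an isomorphism onto a spanning subgraph, so your argument is the routine verification the paper takes for granted. One cosmetic point: you do not need property~2 of Theorem~\ref{theo:realizable}. The condition $\phi(u)_i=0$ already means $d_G(u,\omega^i)=0$, so $u=\omega^i$, which gives $\widehat{W}=\phi(W)$ directly.
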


Given a realization $(G,W)$ of a set $S$, the map $\phi$ defined in Proposition~\ref{prop:embeding} is obviously an isomorphism between $G$ and a spanning subgraph of the canonical realization. 
Hence, from now on we will consider  all the realizations of a set $S$ as spanning subgraphs of $\widehat{G}$ by identifying the realization $(G,W)$ with the realization $(G^*,\widehat{W})$ by means of the map $\phi$. 

Notice that the map $\phi$ is not always an isometric embedding of $G$ into the canonical realization $\widehat{G}$, 
since $d_{\widehat{G}}(\phi(u),\phi(v))=d_G(u,v)$ when $u$ or $v$ is in $W$, but this equality is not
necessarily true for arbitrary $u,v\in V(G)$. 
We illustrate these concepts in the following example.
\begin{example}
    
     The set $S=\{ (0,2),(1,1),(2,0), (1,2), (2,1), (2,2)\}\subset \mathbb{Z}^2$ is realizable, and Figure~\ref{fig:canonical_a} shows the canonical realization of $S$, meanwhile a non-equivalent realization is shown in Figure~\ref{fig:canonical_b}. Both are depicted as subgraphs of $P_4\boxtimes P_4$ (see~\cite{MoraPuertas2023}). In both cases, the coordinates of every vertex are the distances to the vertices of $W=\{(0,2),(2,0)\}$. Clearly the second realization can be embedded into the canonical one, but not isometrically   
      since the distance between the vertices $(1,2)$ and $(2,2)$ is equal to 1 in the first case and equal to 3 in the second one.

\begin{figure}[ht]
     \centering
     \begin{subfigure}[b]{.25\textwidth}
     \centering
         \includegraphics[width=\textwidth]{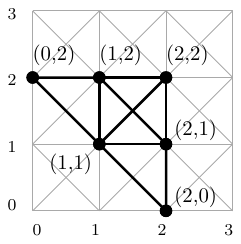}
         \caption{Canonical}
         \label{fig:canonical_a}
     \end{subfigure}
     \hspace{2cm}
     \begin{subfigure}[b]{.25\textwidth}
     \centering
         \includegraphics[width=\textwidth]{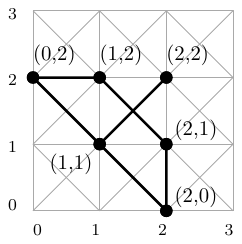}
         \caption{Non-canonical}
         \label{fig:canonical_b}
     \end{subfigure}
        \caption{Non-canonical realization as a spanning subgraph of the canonical one.}
        \label{fig:canonical}
\end{figure}

\end{example}

\section{Minimal realizations}\label{sec:minimal}

As we have stated in Section~\ref{Sec:intro}, all the graphs realizing $S$ can be considered as spanning subgraphs of the canonical realization $(\widehat{G},\widehat{W})$ of $S$. Therefore, the set of edges of the canonical realization is maximum with respect to the inclusion of the edge set.
The goal of this section is the study of minimal realizations of a subset $S\subset \mathbb{Z}^n$, that is, realizations such that the removal of some edges gives rise to a graph that does not realize $S$. 
Among these, we are also interested in determining those with the minimum number of edges.
Also, this leads to determine the sets that are uniquely realizable, that is, subsets such that all realizations are equivalent.

We begin by studying when a realization of $S$ remains so after the addition of an edge to the graph.

\begin{proposition}\label{pro:add_edge} Let $S\subset \mathbb{Z}^n$ be a realizable set.
If $(G,W)$ is a realization of $S$ and $xy\notin E(G)$, then $(G+xy,W)$ is a realization of $S$ if and only if $\max\limits_{i\in [n]}|x_i-y_i|=1$.
\end{proposition}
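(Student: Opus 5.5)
We identify $G$ with a spanning subgraph of $\widehat{G}$ via Proposition~\ref{prop:embeding}, so each vertex $x$ is its own coordinate vector and $x_i=d_G(x,\omega^i)$ for every $\omega^i\in W$. The claim reduces to one question: does adding the edge $xy$ leave all distances to the vertices of $W$ unchanged? If it does, every $r(u\vert W)$ is unchanged, $W$ still resolves $G+xy$, and the realized set is still $S$.

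\emph{Necessity.} Suppose $(G+xy,W)$ realizes $S$. By Proposition~\ref{prop:embeding}, $G+xy$ is then a spanning subgraph of $\widehat{G}$, so $xy\in E(\widehat{G})$, which means $\max_i|x_i-y_i|=1$. Directly: since $x$ and $y$ are adjacent in $G+xy$, the triangle inequality gives $|d(x,\omega^i)-d(y,\omega^i)|\le 1$ for all $i$. The realization property says these distances equal $x_i$ and $y_i$, so $\max_i|x_i-y_i|\le 1$. The maximum cannot be $0$, because $x\neq y$ as elements of $S$. Hence it equals $1$.

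\emph{Sufficiency.} Assume $\max_i|x_i-y_i|=1$ and write $G'=G+xy$. Adding an edge never increases distances, so $d_{G'}(u,\omega^i)\le u_i$ for every vertex $u$ and every $i$. For the reverse inequality, fix $i$ and take a shortest $u$–$\omega^i$ path in $G'$. If it avoids $xy$, its length is at least $d_G(u,\omega^i)=u_i$. If it uses $xy$, say traversing $x$ then $y$, its length is
\[
d_{G'}(u,x)+1+d_{G}(y,\omega^i)\ge d_{G'}(u,x)+1+y_i\ge d_{G'}(u,x)+x_i.
\]
Here the tail of a shortest path after its single use of $xy$ lies entirely in $G$, and $1+y_i\ge x_i$ holds by hypothesis.

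The main obstacle is closing this bound, since $d_{G'}(u,x)$ itself involves $G'$. I would induct on the $G'$-distance from $u$ to $\omega^i$, or equivalently argue by a minimal counterexample $u$. The subpath from $u$ to $x$ is strictly shorter than the whole path, so the inductive hypothesis gives $d_{G'}(u,x)\ge$ the $G$-based bound needed. A cleaner route is to check that the labeling $u\mapsto u_i$ is $1$-Lipschitz on the edges of $G'$: it is on edges of $G$, and on $xy$ by hypothesis. Along any $u$–$\omega^i$ path the label can drop by at most $1$ per edge, starting at $u_i$ and ending at $0$, so the path has length at least $u_i$. This gives $d_{G'}(u,\omega^i)\ge u_i$, so all coordinates are preserved and $(G',W)$ realizes $S$.
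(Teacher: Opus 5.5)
Your proof is correct, and your necessity argument is the paper's: $G+xy$ must sit inside $\widehat{G}$. You also add an equivalent direct triangle-inequality check.

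For sufficiency you take a somewhat different route. The paper obtains $d_{G+xy}(u,\omega^i)=u_i$ from the sandwich $G\subseteq G+xy\subseteq\widehat{G}$. This gives $d_{\widehat{G}}(u,\omega^i)\le d_{G+xy}(u,\omega^i)\le d_G(u,\omega^i)$, with both outer terms equal to $u_i$. The lower end relies on the previously established fact that the canonical realization realizes $S$. You get the upper bound the same way, but prove the lower bound directly: $u\mapsto u_i$ changes by at most $1$ across every edge of $G+xy$ and is $0$ at $\omega^i$. That is exactly the reason $\widehat{G}$ realizes $S$ in the first place. So your argument is self-contained, while the paper's is a one-liner given the earlier result.

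Your first attempt, via path decomposition and induction, is unnecessary and left unfinished. You should drop it and lead with the Lipschitz argument.

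One tacit step is shared by you and the paper. Both read ``$(G+xy,W)$ realizes $S$'' as each vertex keeping its own coordinate vector. This is harmless. Adding an edge can only lower each $r(u\vert W)$ componentwise. Meanwhile, the sum of all coordinates over all vertices must still equal the sum over $S$, since the representation map is a bijection onto $S$. Together these force $r_{G+xy}(u\vert W)=u$ for every $u$.
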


\begin{proof}
We consider $G$ as a spanning subgraph of the canonical realization $\widehat{G}$, that is $V(G)=S$, $E(G)\subset E(\widehat{G})$, and we denote 
$W=\{\omega^1,\dots \omega^n\}$. 

Firstly, given $xy\notin E(G)$, if $(G+xy,W)$ is a realization of $S$ then, $G+xy$ is also a spanning subgraph of $\widehat{G}$ and therefore $xy\in E(\widehat{G})$, that means $\max_{i\in [n]}|x_i-y_i|=1$. 

Conversely, assume that $\max_{i\in [n]}|x_i-y_i|=1$, so $xy\in E(\widehat{G})$, and let $ \omega^i\in W$ and $ z\in V(G+xy)$. Then, $d_{G+xy}(z,\omega^i)=  d_{\widehat{G}}(z,\omega^i)=z_i $, because $xy\in E(\widehat{G})$ and $\widehat{G}$ realizes $S$. This means that $r_{G+xy}(z\vert W)=(z_1, \dots, z_n)$, for every $z\in S$ and thus $(G+xy, W)$ realizes $S$.     
\end{proof}

This result tells us that, in order to maintain the status of realization of $S$, only those edges belonging to the canonical realization can be added to the graph $G$. Therefore, the problem of identifying which edges can be added is completely solved.

Now, we analyse when the removal of an edge of a realization $(G,W)$ of a set $S$ 
gives rise to another realization of the same set. This happens when the distance between any vertex of $W$ and any vertex of $G$ does not increase after the removal of the edge.

\begin{proposition}\label{pro:remove_edge} 
Let $S\subset \mathbb{Z}^n$ be a realizable set.
If $(G,W)$ is a realization of $S$ and $xy\in E(G)$, then 
$r_{G-xy}(u|W)=r_G(u|W)=(u_1,\dots ,u_n)$ for every $u\in S$
if and only if for every $i\in [n]$ such that $x_i=y_i-1$ (resp. $y_i=x_i-1$) there exists $z\in N_G( y)\setminus \{x\}$ (resp. $z\in N_G(x)\setminus \{y\}$) with
$z_i=x_i$ (resp. $z_i=y_i$). 
\end{proposition}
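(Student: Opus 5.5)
Throughout, I treat $G$ as a spanning subgraph of $\widehat{G}$ with $V(G)=S$, and write $W=\{\omega^1,\dots,\omega^n\}$, where $\omega^i$ is the unique element of $S$ with $\omega^i_i=0$. Since $(G,W)$ realizes $S$, we have $d_G(u,\omega^i)=u_i$ for every $u\in S$. Because $xy\in E(G)\subseteq E(\widehat G)$, we have $|x_i-y_i|\le 1$ for every $i$.

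The first observation is that removing an edge cannot decrease distances, so $d_{G-xy}(u,\omega^i)\ge u_i$ always holds. The claimed equality therefore amounts to $d_{G-xy}(u,\omega^i)\le u_i$ for all $u$ and $i$. The whole proof reduces to understanding, for each fixed $i$, when some shortest $u$--$\omega^i$ path in $G$ avoids the edge $xy$.

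\emph{Necessity.} Fix $i$ with $x_i=y_i-1$; the other case is symmetric. Take $u=y$. In $G-xy$ there must be a path from $y$ to $\omega^i$ of length $y_i$. Note $y\neq\omega^i$, since $y_i=x_i+1\ge 1$. Let $z$ be the first vertex after $y$ on this path. Then $z\in N_{G-xy}(y)=N_G(y)\setminus\{x\}$, and $d_{G-xy}(z,\omega^i)\le y_i-1$. Combining this with $d_{G-xy}(z,\omega^i)\ge d_G(z,\omega^i)=z_i$ gives $z_i\le y_i-1$. Since $z$ is adjacent to $y$ in $G$, we also have $z_i\ge y_i-1$. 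Hence $z_i=y_i-1=x_i$, as required.

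\emph{Sufficiency.} Here I would argue by induction on $u_i$, for each fixed $i$, that $d_{G-xy}(u,\omega^i)\le u_i$. The base case $u=\omega^i$ is trivial. For $u_i>0$, pick a $G$-neighbour $v$ of $u$ with $v_i=u_i-1$; one exists because $d_G(u,\omega^i)=u_i$. If $uv\neq xy$, then $uv$ survives in $G-xy$, and induction applied to $v$ gives $d_{G-xy}(u,\omega^i)\le 1+(u_i-1)=u_i$. If instead $uv=xy$, then $x_i\neq y_i$; say $u=y$ and $v=x$ with $x_i=y_i-1$. The hypothesis then supplies $z\in N_G(y)\setminus\{x\}$ with $z_i=x_i=y_i-1$. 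Since the edge $yz$ is still present in $G-xy$, I can replace $v$ by $z$ and apply induction to $z$. The case $x_i=y_i+1$ is symmetric. The case $x_i=y_i$ never arises at this step, because then $xy$ cannot be a descending edge for coordinate $i$.

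The main point requiring care is that the induction is legitimate. I would structure it as strong induction on the value $u_i$ over all vertices simultaneously, so that the hypothesis is available for both $v$ and $z$, which both have coordinate $u_i-1$. I would also note that the hypothesis of the statement depends only on $G$ and not on $G-xy$, which is consistent because $N_G(y)\setminus\{x\}=N_{G-xy}(y)$. Finally, I would conclude $r_{G-xy}(u|W)=(u_1,\dots,u_n)$ for all $u$, which also shows that $W$ still resolves $G-xy$.
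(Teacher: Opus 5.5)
Your proof is correct and follows essentially the same route as the paper: for necessity you take the first vertex after $y$ on a shortest $y$--$\omega^i$ path in $G-xy$, and for sufficiency you reroute the descending step through the witness $z$. The only difference is that your strong induction on $u_i$ explicitly justifies why $d_{G-xy}(z,\omega^i)=z_i$, which the paper's path-surgery argument uses without comment. That fact also follows directly, since a shortest $z$--$\omega^i$ path in $G$ cannot pass through $y$ when $y_i>z_i$.
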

\begin{proof}
Let $(G,W)$ be a realization of $S$ such that $xy\in E(G)$ and we denote $W=\{\omega^1,\dots \omega^n\}$.

Suppose first that $r_{G-xy}(u|W)=r_G(u|W)=(u_1,\dots ,u_n)$ for every $u\in S$.
Let $i\in [n]$ and suppose $x_i=y_i-1$. 
Since
$d_{G-xy}(\omega^i,y)=d_G(\omega^i,y)=y_i$, there is a shortest path $\omega^i,z^1,\dots, z^{r-1},y$ in $G-xy$, where $r=y_i$. Hence, 
$d_G(\omega^i,z^{r-1})=d_{G-xy}(\omega^i,z^{r-1})=r-1=y_i-1=x_i$. Thus, $z=z^{r-1}\in N_G(y)-\{x\}$ and $z_i=x_i$.
Similarly, if  $y_i=x_i-1$ we prove that there exists $z\in S$
such that $z\in N_G(x)-\{y\}$ and $z_i=y_i$.

Now suppose that for every $i\in [n]$ such that $x_i=y_i-1$ (resp. $y_i=x_i-1$) there exists $z\in N_G( y)\setminus \{x\}$ (resp. $z\in N_G(x)\setminus \{y\}$) such that $z_i=x_i$ (resp. $z_i=y_i$). 
We claim that  $d_{G-xy}(\omega^i, u)=d_G(\omega^i, u)=u_i$, for every $u\in V(G)=V(G-xy)=S$.
Indeed, if there is a shortest path from $\omega^i$ to $u$ in $G$ not passing through $xy$, then $d_{G-xy}(\omega^i,u)=d_G(\omega^i,u)=u_i$. Otherwise, 
there is a path of length $u_i$ that goes through $xy$. Assume, without loss of generality, that the path is of the form $\omega^i,\dots ,x,y,\dots ,u$, so that $x_i=y_i-1$. By hypothesis,
there exists $z\in N_G(y)$, $z\not= x$,  such that $z_i=x_i=y_i-1$.
Hence, there is a path of the form $\omega^i,\dots ,z,y,\dots ,u$ 
from $\omega^i$ to $z$ in $G-xy$ and, since $d_{G-xy}(\omega^i,z)=d_{G}(\omega^i,z)=z_i=x_i,$ we have $d_{G-xy}(\omega^i,u)=d_G(\omega^i, u)=u_i$.
\end{proof}

The previous proposition provides necessary and sufficient conditions for a realization not to be minimal, therefore it also allows us to identify those that are minimal. Moreover, it provides us an algorithm to successively remove edges from the canonical realization $\widehat{G}$ to get a minimal realization. 

The following example shows that minimal realizations are not necessarily equivalent, even if they have the same number of edges, and that, in general, minimal realizations are not minimum.

\begin{example}
   The graphs depicted in Figure~\ref{fig:minimal} are non equivalent realizations of the set 
   $S=\{ (0,2),(1,1),(1,2), (2,0), (2,1), (2,2)\}$. In Figure~\ref{fig:realization_tree_a} we show the canonical realization and the rest are non-equivalent minimal realizations. Realization in Figure~\ref{fig:realization_tree_b} has size 8 and both in Figures~\ref{fig:realization_tree_c} and~\ref{fig:realization_tree_d} have size 7. Finally, in Figure~\ref{fig:realization_tree_e} we show a realization with minimum size equal to 6.
   
\begin{figure}[ht]
     \centering
     \begin{subfigure}{.13\textwidth}
     \centering
         \includegraphics[width=\textwidth]{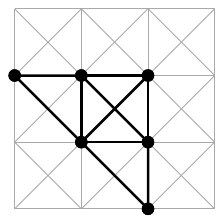}
         \caption{}
         \label{fig:realization_tree_a}
     \end{subfigure}
      \begin{subfigure}{.13\textwidth}
     \centering
         \includegraphics[width=\textwidth]{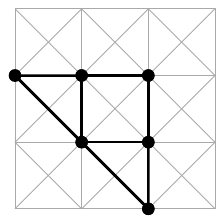}
         \caption{}
         \label{fig:realization_tree_b}
     \end{subfigure}
      \begin{subfigure}{.13\textwidth}
     \centering
         \includegraphics[width=\textwidth]{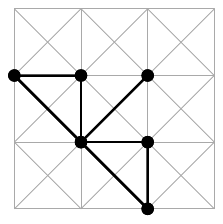}
         \caption{}
         \label{fig:realization_tree_c}
     \end{subfigure}
      \begin{subfigure}{.13\textwidth}
     \centering
         \includegraphics[width=\textwidth]{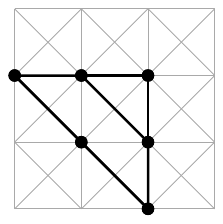}
         \caption{}
         \label{fig:realization_tree_d}
     \end{subfigure}
      \begin{subfigure}{.13\textwidth}
     \centering
         \includegraphics[width=\textwidth]{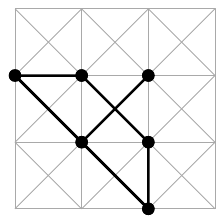}
         \caption{}
         \label{fig:realization_tree_e}
     \end{subfigure}
        \caption{The canonical realization and some non-equivalent minimal realizations of $S$.}
        \label{fig:minimal}
\end{figure}
\end{example}
Since the canonical realization always exists and it contains all the possible edges of every realization, all the realizations of a set $S$ are equivalent, that is, $S$ is uniquely realizable if and only if no proper spanning subgraph of the canonical one provides a realization of $S$. Proposition~\ref{pro:remove_edge} allows us to characterize these sets, as we show in the following theorem. We will need the following notation. For $x\in \mathbb{Z}^n$ and $S\subset \mathbb{Z}^n$, denote $D_S(x)=\{ y\in S: \max_{i\in [n]}  \vert x_i-y_i \vert =1\}$.

\begin{theorem}\label{thm:uniquely}
    If $S\subset \mathbb{Z}^n$ is  a realizable set, then all the realizations of $S$  are equivalent if and only if for every $x,y\in S$ with $\max_{i\in [n]} |x_i-y_i|=1$, there exists $i\in[n]$ such that either $x_i<y_i$ and 
$\{z\in D_S(y) : z_i=y_i-1\}=\{x\}$,
     or 
     $y_i<x_i$ and 
    $\{z\in D_S(x) : z_i=x_i-1\}=\{y\}$.
\end{theorem}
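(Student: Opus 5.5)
The plan is to reduce the statement to Proposition~\ref{pro:remove_edge} applied to the canonical realization $(\widehat{G},\widehat{W})$. As noted just before the theorem, all realizations of $S$ are spanning subgraphs of $\widehat{G}$. So $S$ is uniquely realizable if and only if no proper spanning subgraph of $\widehat{G}$ realizes $S$. The first step is to show that this is equivalent to: no single edge $xy$ of $\widehat{G}$ can be removed so that $\widehat{G}-xy$ still realizes $S$.

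One direction of this reduction is trivial. For the other, suppose a proper spanning subgraph $G$ realizes $S$ and pick any edge $xy\in E(\widehat{G})\setminus E(G)$. By Proposition~\ref{pro:add_edge}, we can add the missing edges of $\widehat{G}$ to $G$ one at a time, except $xy$, and each intermediate graph remains a realization. The last of these graphs is $\widehat{G}-xy$, so $\widehat{G}-xy$ realizes $S$.

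The second step is to translate "$\widehat{G}-xy$ realizes $S$" via Proposition~\ref{pro:remove_edge}. Note that $N_{\widehat{G}}(y)=D_S(y)$, and that $\max_i|x_i-y_i|=1$ for every edge. Since both $W$-sets are $\widehat W$, realizing $S$ with the same $W$ means $r_{\widehat{G}-xy}(u\mid \widehat W)=u$ for all $u\in S$.

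We also need to check connectivity and that $\widehat W$ still resolves $\widehat{G}-xy$. Resolving is automatic, since the coordinate vectors are distinct. Connectivity is also automatic: every vertex keeps a finite distance to $\omega^1$.

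Proposition~\ref{pro:remove_edge} then says that $xy$ is removable if and only if two conditions hold:
\begin{itemize}
\item[(a)] for every $i$ with $x_i=y_i-1$, some $z\in D_S(y)\setminus\{x\}$ has $z_i=x_i=y_i-1$;
\item[(b)] for every $i$ with $y_i=x_i-1$, some $z\in D_S(x)\setminus\{y\}$ has $z_i=x_i-1$.
\end{itemize}

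Now negate. The edge $xy$ is non-removable if and only if there is some $i$ failing (a) or (b). Failing (a) at $i$ means $x_i<y_i$ (so $x_i=y_i-1$) and the set $\{z\in D_S(y): z_i=y_i-1\}$ contains no element other than $x$. Since $x$ itself lies in that set, the set equals $\{x\}$. Failing (b) is symmetric.

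Coordinates with $x_i=y_i$ impose no condition in Proposition~\ref{pro:remove_edge}, and the theorem's condition is likewise vacuous for them, because $x_i<y_i$ or $y_i<x_i$ is required. Also, $|x_i-y_i|\le1$, so $x_i<y_i$ forces $x_i=y_i-1$. Quantifying over all edges $xy$ of $\widehat{G}$, which are exactly the pairs with $\max_i|x_i-y_i|=1$, gives the stated characterization.

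The only delicate point is the first reduction, from arbitrary proper subgraphs to single-edge deletions. Proposition~\ref{pro:add_edge} handles it cleanly, so no real obstacle is expected.
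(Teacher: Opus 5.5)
Your proposal is correct and takes essentially the paper's route. Like the paper, you reduce unique realizability to the requirement that no single edge $xy$ of $\widehat{G}$ can be removed, and then negate the condition of Proposition~\ref{pro:remove_edge} using $N_{\widehat{G}}(y)=D_S(y)$; your use of Proposition~\ref{pro:add_edge} to pass from an arbitrary proper spanning subgraph to $\widehat{G}-xy$ spells out a step the paper only asserts in one line.
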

\begin{proof}
Propositions~\ref{pro:add_edge} and ~\ref{pro:remove_edge} imply that all the realizations of $S$ are equivalent to the canonical one if and only if 
$r_{G-e}(u|W)\not= r_G(u|W)$
for some edge from $e\in E(\widehat{G})$ and some vertex $u\in S$. Recall that for every $x,y\in S$, $xy$ is an edge of $\widehat{G}$ if and only if  $\max_{i\in [n]} |x_i-y_i|=1$. 
Hence, all the realizations of $S$ are equivalent if and only if for every $x,y \in S$ such that $\max_{i\in [n]} |x_i-y_i|=1$, there exists $u\in S$ such that
$r_{\widehat{G}-{xy}}(u|W)\not= r_{\widehat{G}}(u|W)$.
By Proposition~\ref{pro:remove_edge}, this happens if and only if there exists $i\in [n]$ such that either $x_i=y_i-1$ and 
$x$ is the only element in $D_S(y)$ with its ith coordinate equal to $y_i-1$,
or $y_i=x_i-1$ and $y$ is the only element in $D_S(x)$ with its i-th coordinate equal to $x_i-1$, as desired.
\end{proof}

\begin{example}
 The set $S=\{(0,2), (1,1), (1,3), (2,0),  (2,4), (3,1), (3,3), $  $(4,2) \}$ is uniquely realizable, since  the conditions of Theorem~\ref{thm:uniquely} are satisfied. The graph that realizes $S$ is a cycle (see Figure~\ref{fig:unique}). 

    \begin{figure}[ht]
       \centering
\includegraphics[width=0.26\linewidth]{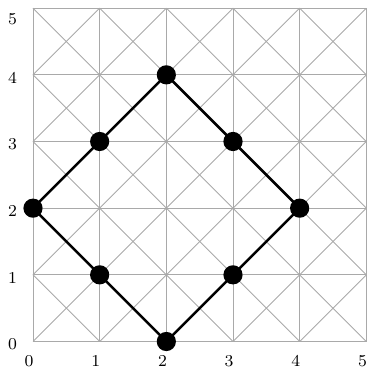}
\caption{The unique realization of the set $S=\{(0,2), (1,1), (1,3), (2,0),  $ $(2,4), (3,1), (3,3), (4,2) \}$.}
\label{fig:unique}
\end{figure}
\end{example}

The following example shows that, similarly to minimal realizations, minimum realizations are not necessarily equivalent.

\begin{example}
  In Figure~\ref{fig:minimum_canonical_2d} we show the canonical realization of the set $S=\{ (0,2), (1,1), (1,3), (2,0), (2,3), (2,4), (3,1), (3,2),(3,3), (4,2)\}$. We have represented with continuous lines the edges that belong to every realization of $S$, because they are in the unique shortest path between a vertex of the graph and a vertex of $W=\{ (0,2), (2,0)\}$, and with dashed lines the rest of the edges. 
  With this in mind, it is not difficult to check that graphs in Figures~\ref{fig:minimum_a_2d} and ~\ref{fig:minimum_b_2d} are two spanning subgraphs of the canonical realization, with the minimum number of edges and that also realize $S$. Moreover, it is clear that they are not isomorphic graphs.

\end{example}
\par\bigskip

\begin{figure}[ht]
     \centering
     \begin{subfigure}[t]{.28\textwidth}
     \centering
         \includegraphics[width=\textwidth]{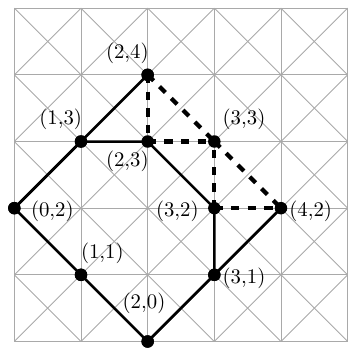}
         \caption{Canonical}
         \label{fig:minimum_canonical_2d}
     \end{subfigure}
     \quad
     \begin{subfigure}[t]{.28\textwidth}
     \centering
         \includegraphics[width=\textwidth]{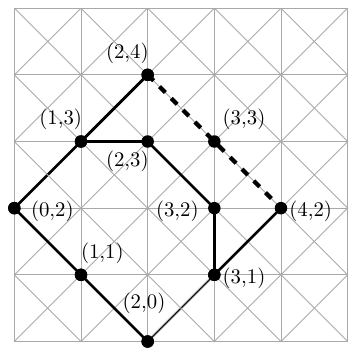}
         \caption{Minimum}
         \label{fig:minimum_a_2d}
     \end{subfigure}
     \quad
     \begin{subfigure}[t]{.28\textwidth}
     \centering
         \includegraphics[width=\textwidth]{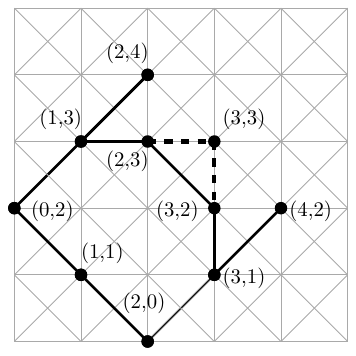}
         \caption{Minimum}
         \label{fig:minimum_b_2d}
     \end{subfigure}
        \caption{Three non-equivalent realizations of $S$.}
        \label{fig:minimum}
\end{figure}

\section{Computational complexity}\label{sec:complexity}
We now focus on the computational complexity of some decision problems regarding to the metric realization of a set of vectors. We begin with the basic realization problem.

\begin{definition}
    We define the Metric Realization Problem $\mathit{METREL}$ as the decision problem of, given a set $S\subset \mathbb{Z}^r$, determine if the set is realizable.
\end{definition}

By using Theorem \ref{theo:realizable}, it is clear that $\mathit{METREL}$ is in $\mathbf{P}$, because the three conditions to be realizable can easily be checked in polynomial time. Moreover, the canonical realization can also be obtained in polynomial time.

By including the idea of minimality of the realization, we can also consider the following decision problem. 

\begin{definition}
    The Bounded Metric Realization Problem $\mathit{BMETREL}$ is the decision problem of, given  a realizable set $S\subset \mathbb{Z}^r$ and a constant $k$, determine if the set is realizable by a graph with at most $k$ edges.
\end{definition}

We prove that the bounded problem $\mathit{BMETREL}$ is $\mathbf{NP-Complete}$, by a reduction of $\mathit{3SAT}$.
\begin{theorem}\label{th:bmetrel}
    $\mathit{BMETREL}$ is $\mathbf{NP-Complete}$.
\end{theorem}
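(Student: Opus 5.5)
We prove membership in $\mathbf{NP}$ and then $\mathbf{NP}$-hardness by a reduction from $\mathit{3SAT}$. The reduction rests on a purely local description of realizations, obtained by iterating Proposition~\ref{pro:remove_edge}.

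\textbf{Local characterization.} A spanning subgraph $H$ of the canonical realization $\widehat{G}$ realizes $S$ (with $\widehat{W}$) if and only if it satisfies the following condition. For every $x\in S$ and every $i\in[n]$ with $x_i>0$, $x$ has a neighbour $y$ in $H$ with $y_i=x_i-1$, i.e.\ $y\in C(x,i):=\{y\in D_S(x): y_i=x_i-1\}$.
\begin{itemize}
\item Necessity: take the predecessor of $x$ on a shortest path to $\omega^i$.
\item Sufficiency, by induction on $x_i$. Edges of $\widehat{G}$ change every coordinate by at most one, so $d_H(x,\omega^i)\ge x_i$. The neighbour $y\in C(x,i)$ gives $d_H(x,\omega^i)\le 1+d_H(y,\omega^i)=1+y_i=x_i$, using the induction hypothesis for $y$.
\end{itemize}

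\textbf{Membership in NP.} The certificate is the edge set $F\subseteq E(\widehat{G})$ with $|F|\le k$. Computing $\widehat{G}$ takes polynomial time, and the condition above can be checked directly, or by BFS from each vertex of $\widehat{W}$.

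\textbf{Combinatorial reformulation.} $\mathit{BMETREL}$ becomes a covering problem. Each pair $(x,i)$ with $x_i>0$ is a \emph{demand}. An edge $xy$ of $\widehat{G}$ satisfies the demands $(x,i)$ with $y_i=x_i-1$ and the demands $(y,i)$ with $x_i=y_i-1$. We must satisfy all demands with at most $k$ edges. An edge serving several demands of the same vertex, or demands at both endpoints, is what creates savings, and the reduction encodes truth values in these savings. Given a formula with variables $v_1,\dots,v_p$ and clauses $c_1,\dots,c_q$, the gadgets are as follows.
\begin{itemize}
\item \emph{Variable gadget.} For each $v_j$ take vertices $T_j$ and $F_j$ sharing a demand in a coordinate $\alpha_j$. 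This demand can only be met by a common vertex $a_j$ whose coordinate $\alpha_j$ is one less.
\item \emph{Clause coordinates.} For each clause $c_l$ add a coordinate $\beta_l$ and a clause vertex $C_l$. The demand $(C_l,\beta_l)$ has candidate set exactly the literal vertices of the literals in $c_l$.
\item \emph{Double duty.} The coordinates are arranged so that the edge $C_lT_j$ (or $C_lF_j$) also satisfies a second demand of $C_l$ only if that literal vertex already carries its "selected" edge to $a_j$. Concretely, the literal vertex lies one below $C_l$ in an auxiliary coordinate only when selected.
\end{itemize}
Choose $k$ equal to the number of forced edges plus $p$ (one selection per variable) plus $q$. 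The intended correspondence is that a satisfying assignment gives a realization with exactly $k$ edges. Conversely, any realization with at most $k$ edges must make exactly one choice per variable, and every clause must be served by a selected literal, i.e.\ by a true literal.

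\textbf{Main obstacle.} The hardest step is the geometric one: writing explicit integer vectors that satisfy the three conditions of Theorem~\ref{theo:realizable} and whose canonical graph contains \emph{only} the intended edges. We must also control every candidate set $C(x,i)$, so that no unintended shortcut lowers the edge count.
\begin{itemize}
\item To isolate gadgets, give each gadget its own block of coordinates. Set all "background" coordinates to a large common value $M$, shifted by $\pm1$ only where adjacency is desired, so that distinct gadgets differ by at least $2$ in some coordinate.
\item Each landmark $\omega^i$ must be joined to the gadgets by paths in the coordinates that decrease one step at a time, to satisfy condition~3 of Theorem~\ref{theo:realizable}. These connector paths have singleton candidate sets, so their edges are forced and contribute a fixed count to $k$.
\end{itemize}
With this padding in place, the counting argument, namely that no demand can be covered more cheaply than by the intended choices, reduces to a local check on each gadget. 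The completeness and soundness directions then follow from the local characterization.
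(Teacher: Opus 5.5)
You have two parts right. The $\mathbf{NP}$-membership argument is correct. So is your local characterization: a spanning subgraph $H$ of $\widehat{G}$ realizes $S$ if and only if every demand $(x,i)$ with $x_i>0$ is met by an $H$-neighbour $y$ with $y_i=x_i-1$.

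The gap is the hardness reduction, which is the real content of the theorem. You give no vectors, and the mechanism you describe cannot work. By your own characterization, the demands an edge $xy$ covers are fixed by the vectors $x,y$, so they cannot depend on which other edges are chosen. ``The literal vertex lies one below $C_l$ in an auxiliary coordinate only when selected'' therefore has no meaning: coordinates are part of the input, not of the solution.

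Your variable gadget, as written, also offers no choice. If $(T_j,\alpha_j)$ and $(F_j,\alpha_j)$ can only be met by $a_j$, then both $a_jT_j$ and $a_jF_j$ are forced. Finally, each separate clause vertex pays one edge to any literal of its clause, and nothing ties that edge to the assignment. So your budget would not separate satisfiable from unsatisfiable formulas.

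The missing idea is that the \emph{selection edge itself} must cover the clause demands. The paper achieves this by putting both kinds of demand on a single vertex $t$:
\begin{itemize}
\item[(a)] The landmarks are $r_i$, at distance $1$ from exactly $x_i,\overline{x_i}$; $c_j$, at distance $1$ from exactly the literals of $C_j$; and $s$, at distance $1$ from every literal. The vertex $t$ is at distance $2$ from all landmarks.
\item[(b)] The landmarks' neighbourhoods are forced, which accounts for $4n+\sum_j|C_j|$ edges.
\item[(c)] The demand $(t,i)$ has candidate set $\{x_i,\overline{x_i}\}$, and these sets are pairwise disjoint. Hence $k=5n+\sum_j|C_j|$ allows exactly one edge from $t$ per variable and no other edges.
\item[(d)] The edge $t\ell$ covers $(t,n+j)$ for every clause $C_j\ni\ell$. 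So the requirement $d(c_j,t)=2$ says exactly that the chosen literals satisfy $C_j$.
\end{itemize}
Realizability of $S$ is certified by an explicit bipartite graph $G_0$, with $t$ joined to all literals, so no padding or connector paths are needed. Without a construction of this kind, your argument proves only membership in $\mathbf{NP}$.
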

\begin{proof}
    To prove $\mathit{BMETREL} \in \mathbf{NP}$ we just need to prove that there exists a witness for the problem that can be verified in polynomial time. Given a realizable set $S \subset \mathbb{Z}^n$ and a bound on the number of edges $k$, a witness for the existence of a valid realization is merely the realization itself. It can be easily verified in polynomial time to be a realization and that it does not exceed the number of edges. To prove $\mathit{BMETREL}$ is $\mathbf{NP-Hard}$ we shall reduce $\mathit{3SAT}$ to $\mathit{BMETREL}$ in polynomial time.
    
    Firstly, let $f$ be a $\mathit{3SAT}$ formula with $n$ variables $x_1, x_2, \dots, x_n$ and $m$ clauses $C_1, C_2, \dots, C_m$, each being a logical disjunction of $1$ to $3$ literals, with a literal being a variable $x_i$ or its negation $\overline{x_i}$.

    We shall assume, without loss of generality, that every variable appears with at least one other variable in some clause. If a variable appears in isolation we may get rid of it by unit propagating the variable into the formula. 
    
    To reduce from $\mathit{3SAT}$ to $\mathit{BMETREL}$ we shall produce a $\mathit{BMETREL}$ instance in polynomial time with equal truth value to the original $\mathit{3SAT}$ instance. To do so, we shall construct a realizable set $S \subset \mathbb{Z}^{r}$ such that $|S| = 3n + m + 2$ and $r = n + m + 1$. For ease of discussion, we shall adopt the following naming convention:
    \begin{itemize}
        \item There are $n$ elements in $S$  named $x_1, \dots, x_n$, and another $n$ elements named $\overline{x_1}, \dots, \overline{x_n}$.
         \item There are $n$ elements named $r_1, \dots, r_n$.
        \item There are $m$ elements named $c_1, \dots, c_m$.
        \item One element of $S$ is named $s$, and another one 
        named $t$.
    \end{itemize}
So $S=\{x_1, \dots, x_n, \overline{x_1}, \dots, \overline{x_n}, r_1,\dots, r_n, c_1, \dots, c_m, s,t\}$ and $\vert S\vert =3n+m+2$. Each $u=(u_1,\dots , u_{n+m+1})\in S\subset \mathbb{Z}^{n+m+1}$ is a vector with $n+m+1$ coordinates and in order to describe them, we denote $u_i=\delta(r_i,u)$ for $i\in[n]$, $u_{n+j}=\delta(c_j,u)$, for $j\in[m]$ and $u_{n+m+1}=\delta(s,u)$.
We now describe the coordinates of every element of $S$.

\begin{itemize}
     \item Coordinates $u_i$, $i\in[n]$:
     
        $
        \begin{array}{l}
             \delta(r_i, x_i) = \delta(r_i, \overline{x_i}) = 1\\
             \delta(r_i, x_p) = \delta(r_i, \overline{x_p}) = 3 \text{ for } i \neq p\\
             \delta(r_i, r_i) = 0 \\
              \delta(r_i, r_p) = 4 \text{ for } i \neq p\\
             \delta(r_i, c_j) =  \left\{\begin{matrix}
                2 & \text{if }x_i \in C_j \text{ or } \overline{x_i} \in C_j\\
                4 & \text{otherwise} \\
            \end{matrix}\right.\\
             \delta(r_i, s) = \delta(r_i, t) = 2.
        \end{array}
        $

 \item Coordinates $u_{n+j}$ with $j\in[m]$:
     
  $
        \begin{array}{l}
              \delta(c_j, l \in \{x_i, \overline {x_i}\}) =  \left\{\begin{matrix}
                1 & \text{if }l \in C_j \\
                3 & \text{if }l \notin C_j \\
            \end{matrix}\right.\\
             \delta(c_j, r_i) =  \left\{\begin{matrix}
                2 & \text{if }x_i \in C_j \text{ or } \overline{x_i} \in C_j\\
                4 & \text{otherwise} \\
            \end{matrix}\right.\\
        \end{array}\\
        \begin{array}{l}
             \delta(c_j, c_j) = 0\\
               \delta(c_j, c_p) =  \left\{\begin{matrix}
                2 & \text{if }j \neq p \text{ and }C_j \cap C_p \neq \emptyset \\
                4 & \text{if }j \neq p \text{ and }C_j \cap C_p = \emptyset \\
            \end{matrix}\right.\\
             \delta(c_j, s) = 
             \delta(c_j, t) = 2.\\      
         \end{array}
        $
        
\item Coordinate $u_{n+m+1}$: 
        
         $\begin{array}{l}
         \delta(s, x_i) = \delta(s, \overline{x_i}) = 1\\
         \delta(s, s) = 0\\
         \delta(s, u) = 2 \text{\ otherwise}.
         \end{array}
         $
    \end{itemize} 
    
By construction, the elements of $S$ that have a zero coordinate are the following: each $r_i$, with the $i-th$ coordinate equal to zero; each $c_j$, with the $n+j-th$ coordinate equal to zero; $s$, with the $n+m+1$ coordinate equal to zero.

We now prove that $S$ is realizable. To this end, we define the bipartite graph $G_0$ with vertex set $V(G_0)=S$, stable sets  $V_1=\{x_1,\dots x_n,\overline{x_1}, \dots \overline{x_n}\}$ and $V_2=\{r_1,\dots, r_n, c_1, \dots, c_m, s,t \}$, and edge set 
\begin{align*}
E(G_0)=
 & \{r_il\colon l\in \{x_i,\overline{x_i}\}, i\in [n]\} \cup \\
 & \{ c_jl\colon l\in \{x_i,\overline{x_i}\}, i\in [n] \text{ and } l\in C_j, j\in [m]\}\cup\\
 &\{sl\colon l\in \{x_i,\overline{x_i}\}, i\in [n]\}\cup \\ 
& \{ tl\colon l\in \{x_i,\overline{x_i}\}, i\in [n]\}.
\end{align*}

\begin{claim}
    The pair $(G_0, W)$ realizes $S$, where $W=\{r_1,\dots, r_n, c_1, \dots, c_m, s \}$.
\end{claim}
We will check that $S=\{ r(u\vert W) \colon u\in V(G_0)\}$. 

\begin{itemize}
\item $u=x_i, \overline{x_i}$:

Coordinates $u_i$, $i\in [n]$:\\
$u_i=\delta(r_i,u)=1=d_{G_0}(r_i,u)$, because they are neighbours in $G_0$; \\
if $p\neq i$ then, $u_p=\delta(r_p,u)=3=d_{G_0}(r_p,u)$ because they belong to different stable sets, so the distance is an odd number, they are not neighbours and the path $r_p-x_p-s-u$ has length $3$.   

Coordinates $u_{n+j}$, $j\in [m]$:\\
if $u\in C_j$, then $u_{n+j}=\delta(c_j,u)=1=d_{G_0}(c_j,u)$;\\
if $u\notin C_j$, then $u_{n+j}=\delta(c_j,u)=3=d_{G_0}(c_j,u)$ because they belong to different stable sets, so the distance is an odd number, they are not neighbours and the path $c_j-l-s-u$, with $l\in C_j$, has length $3$.   

Coordinate $u_{n+m+1}$:\\
$u_{n+m+1}=\delta(s,u)=1=d_{G_0}(s,u)$.

\item $u=r_i$:\\
Coordinates $u_i$, $i\in [n]$:\\
$u_i=\delta(r_i,r_i)=0=d_{G_0}(r_i,r_i)$; \\
if $p\neq i$ then, $u_p=\delta(r_p,r_i)=4=d_{G_0}(r_p,r_i)$ because they belong to the same stable set, so the distance is an even number, they have no common neighbour and the path $r_p-x_p-s-x_i-r_i$ has length $4$.

Coordinates $u_{n+j}$, $j\in [m]$:\\
if $x_i\in C_j$ or $\overline{x_i}\in C_j$, then $u_{n+j}=\delta(c_j,r_i)=2=d_{G_0}(c_j,r_i)$ because they are not neighbours and they have a common neighbour ($x_i$ or $\overline{x_i}$);\\
if $x_i\notin C_j$ and $\overline{x_i}\notin C_j$, then $u_{n+j}=\delta(c_j,r_i)=4=d_{G_0}(c_j,r_i)$ because they belong to the same stable set, so the distance is even, they have no common neighbour and the path $c_j-l-s-x_i-r_i$, with $l\in C_j$, has length $4$.   

Coordinate $u_{n+m+1}$:\\
$u_{n+m+1}=\delta(s,r_i)=2=d_{G_0}(s,r_i)$ because they are not neighbours and they have common neighbours (both $x_i$ and $\overline{x_i}$).

\item $u=c_j$:\\
Coordinates $u_i$, $i\in [n]$:\\
if $x_i\in C_j$ or $\overline{x_i}\in C_j$, then $u_{i}=\delta(r_i,c_j)=2=d_{G_0}(r_i,c_j)$ because they are not neighbours and they have a common neighbour ($x_i$ or $\overline{x_i}$);\\
if $x_i\notin C_j$ and $\overline{x_i}\notin C_j$, then $u_{i}=\delta(r_i,c_j)=4=d_{G_0}(r_i,c_j)$ because they belong to the same stable set, so the distance is an even number, they have no common neighbours and the path $r_i-s-x_i-l-c_j$, with $l\in C_j$, has length $4$.   

Coordinates $u_{n+j}$, $j\in [m]$:\\
$u_{n+j}=\delta(c_j,c_j)=0=d_{G_0}(c_j,c_j)$;\\
if $p\neq j$ and $C_p\cap C_j\neq \emptyset$ then, $u_{n+p}=\delta(c_p,c_j)=2=d_{G_0}(c_p,c_j)$ because they belong to the same stable set, so the distance is an even number, and they have common neighbours (every literal in $C_p\cap C_j$);\\
if $p\neq j$ and $C_p\cap C_j= \emptyset$ then, $u_{n+p}=\delta(c_p,c_j)=4=d_{G_0}(c_p,c_j)$ because they belong to the same stable set, so the distance is an even number, they have no common neighbours and the path $c_p-a-s-b-c_j$, with $a\in C_p$ and $b\in C_j$, has length $4$.   

Coordinate $u_{n+m+1}$:\\
$u_{n+m+1}=\delta(s,c_j)=2=d_{G_0}(s,c_j)$ because they are not neighbours and they have common neighbours (every literal in $C_j$).

\item $u=s,t$:\\
Coordinates $u_i$, $i\in [n]$:\\
$u_i=\delta(r_i,u)=2=d_{G_0}(r_i,u)$ because they are not neighbours and they have common neighbours (both $x_i$ and $\overline{x_i}$);\\

Coordinates $u_{n+j}$, $j\in [m]$:\\
$u_{n+j}=\delta(c_j,u)=2=d_{G_0}(c_j,u)$ because they are not neighbours and they have common neighbours (every literal in $C_j$);\\

Coordinate $u_{n+m+1}$:\\
$s_{n+m+1}=\delta(s,s)=0=d_{G_0}(s,s)$;\\
$t_{n+m+1}=\delta(s,t)=2=d_{G_0}(s,t)$ because they are not neighbours and they have common neighbours (every $x_i$ and every $\overline{x_i}$).
\end{itemize}

 Finally, to get an instance of $\mathit{BMETREL}$ decision problem, we set the integer $k$ as:    
 
    $$
    k = 5n + \sum_{j = 1}^m |C_j|.
    $$

  Note that $\vert E(G_0) \vert =6n + \sum_{j = 1}^m |C_j|>k$, so $G_0$ is not a solution of the instance of the $\mathit{BMETREL}$ decision problem.

Assume now that this instance of $\mathit{BMETREL}$ is true, then $S$ is realizable by graph $G$ with $V(G)=S$, $\vert E(G)\vert \leq k$ and, by our election of the order of the coordinates, the resolving set is $W=\{r_1,\dots, r_n, c_1, \dots, c_m, s \}$. 

So, for each $u\in V(G)$, we know that $d_G(r_i,u)=\delta (r_i,u)$, $d_G(c_j,u)=\delta (c_j,u)$ and $d_G(s,u)=\delta (s,u)$. This means that the degree of vertex $s$ is $2n$, because it is at distance $1$ from each $x_i$ and each $\overline{x_i}$, the degree of each vertex $r_i$ is $2$, because it is at distance $1$ from $x_i$ and $\overline{x_i}$ and the degree of each vertex $c_j$ is $\vert C_j\vert$, because it is at distance $1$ from each element of $C_j$. So, we have $4n+\sum_{i=j}^m |C_j|$ edges in $G$, which have one vertex in $W$ and the other one in $\{x_1,\dots x_n, \overline{x_1}, \dots \overline{x_n}\}$.
Now, for each $i\in [n]$ we know that $d_G(r_i,t)=2$, so $t$ is a neighbour of $x_i$ or $\overline{x_i}$, which are the unique neighbours of $r_i$. By using that $\vert E(G)\vert \leq k=5n + \sum_{j = 1}^m |C_j|$, we obtain that $t$ is a neighbour of exactly one vertex in $\{x_i, \overline{x_i}\}$, for each $i\in [n]$. These are all the edges of $G$.

Moreover, for each $j\in [m]$, $d_G(c_j,t)=2$ and using that the neighbours of $c_j$ are the literals that form the clause $C_j$, this implies that at least one of the literals that forms the constraint $C_j$ is a neighbour of $t$. Taking the $n$ literals that are neighbours of $t$, we get precisely an assignment that satisfies the original $3SAT$ instance.

Conversely, we now assume that the $3SAT$ formula $f$ is satisfiable, and we have to prove that the instance of $\mathit{BMETREL}$ is realizable by a graph with at most $k$ edges. We consider the graph $G$ with vertex set $V(G)=S$ and edge set 
\begin{align*}
E(G)=
 & \{r_il\colon l\in \{x_i,\overline{x_i}\}, i\in [n]\} \cup \\
 & \{ c_jl\colon l\in \{x_i,\overline{x_i}\}, i\in [n] \text{ and } l\in C_j, j\in [m]\}\cup\\
 &\{sl\colon l\in \{x_i,\overline{x_i}\}, i\in [n]\}\cup \\ 
& \{ tl\colon l\in \{x_i,\overline{x_i}\}, i\in [n] \text{ and } l \text{ is true}\}.
\end{align*}
Clearly $\vert E(G)\vert=5n + \sum_{j = 1}^m |C_j|=k$. We take $W=\{r_1,\dots, r_n, c_1, \dots, c_m, s \}$ and we have to check that $(G,W)$ realizes $S$.

Note that the edge set of $G$ is the same as the edge set of $G_0$, except for the edges containing vertex $t$. Moreover, the reasoning above about the coordinates of vertices $u\in V(G_0)=V(G)=S, u\neq t$, does not include any edge containing $t$. This means that in the graph $G$, every vertex $u\neq t$ satisfies $u=(u\vert W)$. Finally, we check this condition also for $t$:\\
Coordinates $t_i$, $i\in [n]$:\\
$t_i=\delta(r_i,t)=2=d_{G}(r_i,t)$ because they are not neighbours in $G$ and they have a common neighbour ($x_i$ or $\overline{x_i}$, the true one);\\
Coordinates $t_{n+j}$, $j\in [m]$:\\
$t_{n+j}=\delta(c_j,t)=2=d_{G}(c_j,t)$ because they are not neighbours in $G$ and they have a common neighbour (each true literal in $C_j$);\\
Coordinate $t_{n+m+1}$:\\
$t_{n+m+1}=\delta(s,t)=2=d_{G}(s,t)$ because they are not neighbours $G$ and they have common neighbours ($x_i$ or $\overline{x_i}$, the true one, for every $i\in [n]$).

    This reduction produces a set $S$ of size $3n + m + 2$, with each element being a vector with $m + n + 1$ coordinates, making the output size $O(n^2 + m^2 + nm)$, which is polynomial with respect to the size of the formula $f$, and the whole reduction can easily be run in polynomial time.
\end{proof}

\section{Trees}\label{sec:trees}
In the context of the study of the minimal realizations of $S\subset\mathbb{Z}^n$, it is natural to ask about realizations that are trees. So, in this section we focus on characterizing sets that can be realized by a tree. This problem also appears in~\cite{Feit2018}.

We denote
$S_1=\{x\in S: (x_1-1,\dots,x_n-1)\in S \}$ and $S_0=S\setminus S_1$.
If $x\in \mathbb{Z}^n$ and $k\ge 1$, we denote $x-k=(x_1-k,\dots,x_n-k)$. With this terminology, $S_1=\{x\in S: x-1\in S\}$. In the following lemmas we show the role that the subset $S_0$ plays in the realization of a set $S$ by a tree. 

\begin{lemma}\label{lem:V0W}
Let $S\subset \mathbb{Z}^n$ and suppose that there is a realization $(T,W)$ of $S$ such that $T$ is  a tree. Then, $S_0$ is the set of vertices of $T$ belonging to some path with both end vertices in $W$.
\end{lemma}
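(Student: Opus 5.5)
Identify each vertex $u$ of $T$ with its vector $r(u\vert W)$, and let $T_W$ be the union of all paths in $T$ with both end vertices in $W$. In a tree this is the smallest subtree containing $W$. We have to show that $u\in T_W$ if and only if $u-1\notin S$.

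For this we use the projection onto $T_W$. Every vertex $u\notin T_W$ has a unique closest vertex $p(u)\in T_W$, namely the vertex where the path from $u$ first meets $T_W$, and we set $k=d(u,p(u))\ge 1$. Since $T$ is a tree and $T_W$ is a subtree containing $W$, every path from $u$ to any $\omega^i\in W$ passes through $p(u)$. Hence $d(u,\omega^i)=k+d(p(u),\omega^i)$ for all $i$.

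\emph{Vertices off $T_W$ lie in $S_1$.} Let $u\notin T_W$ and let $v$ be the neighbour of $u$ on the path from $u$ to $p(u)$. Then $v$ lies on every path from $u$ to $W$, so $d(v,\omega^i)=d(u,\omega^i)-1$ for every $i$. Thus $r(v\vert W)=u-1$, which is in $S$, and so $u\in S_1$.

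\emph{Vertices on $T_W$ lie in $S_0$.} Let $u\in T_W$, so $u$ lies on the path between some $\omega^a,\omega^b\in W$, and suppose for contradiction that some vertex $v$ has $r(v\vert W)=u-1$. We first dispose of two easy cases.
\begin{itemize}
\item If $u\in W$, then $u$ has a zero coordinate, so $u-1$ has a negative entry and cannot be in $S$.
\item If $\omega^a=\omega^b$, the path is trivial and we are back in the previous case; so we may assume $a\neq b$.
\end{itemize}
In the remaining case, the equalities $d(v,\omega^a)=d(u,\omega^a)-1$ and $d(v,\omega^b)=d(u,\omega^b)-1$, together with the triangle inequality, give
$$d(\omega^a,\omega^b)\le d(v,\omega^a)+d(v,\omega^b)=d(u,\omega^a)+d(u,\omega^b)-2=d(\omega^a,\omega^b)-2,$$
where the last equality uses that $u$ lies on the $\omega^a$–$\omega^b$ path. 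This is a contradiction, so $u\in S_0$.

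The argument is short. The step needing the most care is the tree-specific fact that all paths from $u$ to $W$ pass through $p(u)$. This holds because the unique path from $u$ to any $\omega^i$ must enter the connected subtree $T_W$, and it enters at $p(u)$ since the branch hanging off $T_W$ at $p(u)$ meets $T_W$ only there. The second direction uses nothing beyond the triangle inequality.
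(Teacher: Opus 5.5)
Your proof is correct and is essentially the paper's argument. For vertices on a $W$–$W$ path you use the same triangle-inequality contradiction. For the converse, you show that the neighbour $v$ through which every path to $W$ passes satisfies $r(v\vert W)=u-1$; you phrase this contrapositively via the projection onto the subtree $T_W$, whereas the paper argues that if all of $W$ lies in a single component of $T-u$ then the corresponding neighbour equals $u-1$.
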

\begin{proof}
Consider first a vertex $u$ belonging to some path in $T$ with end vertices in $W$. We claim $u\in 
S_0$. 

Clearly, if $u\in W$ we are done, so assume that $u$ is an interior vertex of the only path in $T$ from  $\omega^i$ to $ \omega^j$, then $d(\omega^i,\omega^j)=d(\omega ^i,u)+d(\omega^j, u)$. Suppose, on the contrary, that $u\in S_1$, that is $u-1\in S$. By using that $(T,W)$ realizes $S$, there exists $v\in V(T)=S$ such that $v=u-1$. Then, $d(\omega^i,\omega^j)\leq d(\omega^i,v)+d(v,\omega^j)=v_i+v_j=u_i-1+u_j-1=d(\omega ^i,u)+d(\omega^j, u)-2=d(\omega^i,\omega^j)-2$, a contradiction.

Suppose now that  $u\in S_0$. If $u\in W$, then $u$ is trivially a path in $S_0$ with end vertices in $W$. 
If $u\in S_0\setminus W$, let $v^1,\dots ,v^d$ be the neighbors of $u$ in $T$. 
Consider the connected components $C_1,\dots ,C_d$ of $T-u$, where $v^i\in V(C_i)$. There are at least two components with vertices in $W$, because otherwise $W\subseteq V(C_{i_0})$ for some $i_0\in [d]$, implying that $v^{i_0}=u-1$ and, consequently $u\in S_1=S\setminus S_0$, a contradiction. 
Hence, $u$ belongs to a path with end vertices in $W$, concretely, the vertices of $W$ belonging to different components of $T-u$.
\end{proof}

\begin{lemma}\label{lem:V0tree}
Let $S\subset \mathbb{Z}^n$ and suppose that there is a realization $(T,W)$ of $S$ such that $T$ is  a tree. Then, the set $S_0$ induces a tree in $T$ with all leaves belonging to $W$. 
\end{lemma}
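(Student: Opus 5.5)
By Lemma~\ref{lem:V0W}, $S_0$ is exactly the union of the vertex sets of the paths of $T$ whose two end vertices lie in $W$. So the plan is to show that this union is the vertex set of the minimal subtree of $T$ spanning $W$. That subtree is connected and acyclic, and its leaves lie in $W$.

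First, connectivity. Fix some $\omega^1\in W$. For any $u\in S_0$, Lemma~\ref{lem:V0W} gives a path $P$ in $T$ through $u$ with end vertices $\omega^i,\omega^j\in W$. I will show that the unique $u$--$\omega^1$ path in $T$ stays inside $S_0$. Consider the three vertices $\omega^i,\omega^j,\omega^1$. In a tree, the union of the three pairwise paths between them is a subtree, and it contains $P$. Every vertex of this union lies on a path between two vertices of $W$. The $u$--$\omega^1$ path lies in this union, so by Lemma~\ref{lem:V0W} all its vertices are in $S_0$. Hence every vertex of $S_0$ is joined to $\omega^1$ inside $S_0$, and $T[S_0]$ is connected. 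Since $T[S_0]$ is an induced subgraph of a tree, it is acyclic, and so it is a tree.

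Second, the leaves. Let $u$ be a leaf of $T[S_0]$, and suppose $u\notin W$. By Lemma~\ref{lem:V0W}, $u$ is an interior vertex of a path $\omega^i\cdots u\cdots\omega^j$. All vertices of this path are in $S_0$, again by Lemma~\ref{lem:V0W}. The two neighbours of $u$ on the path are distinct and both lie in $S_0$, so $u$ has degree at least $2$ in $T[S_0]$. This contradicts $u$ being a leaf, so $u\in W$.

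One degenerate case needs a check: if $|S_0|=1$, the single vertex counts as a leaf. Then $S_0=W$ has one element, so that leaf belongs to $W$ as well. No step looks like a real obstacle. The one point to argue carefully is that the path between two vertices of $S_0$ stays inside $S_0$, and the union-of-paths observation above handles it.
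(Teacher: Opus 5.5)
Your proposal is correct and follows the same route as the paper: both derive connectivity of $T[S_0]$ and the leaf property directly from Lemma~\ref{lem:V0W}, using that a vertex of $S_0\setminus W$ is interior to a path between two vertices of $W$ and so has two neighbours in $S_0$. The paper only asserts that Lemma~\ref{lem:V0W} gives connectivity, while you justify it by noting that the $u$--$\omega^1$ path lies in the subtree spanned by $\omega^i,\omega^j,\omega^1$, all of whose vertices lie on paths between vertices of $W$; this fills in a detail rather than changing the approach.
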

\begin{proof}
On the one hand, Lemma~\ref{lem:V0W} implies that the set $S_0$ induces a connected graph in $T$  and, hence, a tree.
On the other hand, if $u\in S_0\setminus W$, then Lemma~\ref{lem:V0W} implies that $u$ has at least two neighbours  in $S_0$. Therefore, $u$ is not a leaf in the tree induced by $S_0$.
\end{proof}

We can now characterize the sets that are realizable by a tree.

\begin{theorem}\label{thm:charact}
If $S\subset \mathbb{Z}^n$ is a realizable set, then there exists a realization $(T,W)$ of $S$ such that $T$ is a tree if and only if
the following conditions hold
\begin{enumerate}[i)]
\item for every $x,y\in S$, if $\max_{i\in [n]}|x_i-y_i|=1$, then $y_j\not= x_j$ for every $j\in [n]$; 
\item for every $x\in S_0$ and every $j\in [n]$ such that $x_j>0$, there exists exactly one element $y\in S_0$
such that $\max_{i\in [n]}|x_i-y_i|=1$ and $y_j=x_j-1$.
\end{enumerate}  
\end{theorem}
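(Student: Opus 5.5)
The plan is to prove the two directions separately. Necessity uses the bipartite and Steiner-tree structure of $T$ together with Lemmas~\ref{lem:V0W} and~\ref{lem:V0tree}. Sufficiency builds a tree explicitly: first on $S_0$, then with the vertices of $S_1$ hung below it.

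\emph{Necessity of i).} Let $(T,W)$ be a realization by a tree. Since $T$ is bipartite, for any $x,y\in S$ the parity of $d(x,\omega^j)-d(y,\omega^j)=x_j-y_j$ equals the parity of $d_T(x,y)$, so it does not depend on $j$. If $\max_i|x_i-y_i|=1$, some coordinate differs by exactly $1$, an odd number. Hence no coordinate can differ by $0$, which gives i).

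\emph{Necessity of ii).} Take $x\in S_0$ and $j$ with $x_j>0$. By Lemma~\ref{lem:V0tree}, $S_0$ induces a subtree of $T$ containing $W$. So the $T$-path from $x$ to $\omega^j$ lies inside $S_0$, and its second vertex $y$ satisfies $y\in S_0$, $y_j=x_j-1$ and $\max_i|x_i-y_i|=1$. This gives existence.

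For uniqueness, I will first prove an auxiliary claim: for $u,v\in S_0$ there is $\omega\in W$ with $d(u,\omega)=d(u,v)+d(v,\omega)$. To see this, extend the $u$--$v$ path inside the tree induced by $S_0$ beyond $v$ until reaching a leaf of that tree. By Lemma~\ref{lem:V0tree} such a leaf lies in $W$; if $v\in W$ already, take $\omega=v$. The claim gives $d_T(u,v)\le \max_i |u_i-v_i|$.

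Now suppose $y'\in S_0$ is another element with $y'_j=x_j-1$ and $\max_i|x_i-y'_i|=1$. The claim forces $d_T(x,y')=1$. Then $y$ and $y'$ are two distinct neighbours of $x$, both at distance $x_j-1$ from $\omega^j$. This gives two different paths from $x$ to $\omega^j$ in a tree, a contradiction.

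\emph{Sufficiency.} Let $T_0$ be the subgraph of $\widehat G$ induced by $S_0$. I will show that its edges are exactly the pairs selected by ii). Let $x,y\in S_0$ be adjacent in $\widehat G$. By i), every coordinate of $y$ differs from the corresponding coordinate of $x$ by exactly $1$. If some coordinate decreases, say $y_j=x_j-1$, then ii) applied to $x$ says $y$ is the unique such element. Symmetrically, if $y_i=x_i+1$, then $x$ is the unique element chosen for $y$ in direction $i$.

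\begin{itemize}
\item $T_0$ is acyclic. Take a cycle in $T_0$ and a vertex $u$ on it maximizing the $j$-th coordinate, for a fixed $j$. By i), both cycle-neighbours of $u$ have $j$-coordinate $u_j-1$. This contradicts the uniqueness in ii).
\item $T_0$ is connected. From any $x\in S_0$, repeatedly follow the unique descending neighbour in direction $1$. Coordinate $1$ vanishes only at $\omega^1$, by Theorem~\ref{theo:realizable}, so this walk reaches $\omega^1$.
\end{itemize}

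Next, attach each $x\in S_1$ to $x-1$ by one edge. The chain $x,x-1,x-2,\dots$ must end in $S_0$, since coordinates are nonnegative and some coordinate eventually reaches $0$. This yields a spanning tree $T\subseteq \widehat G$.

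Finally, I check the distances. Since $T$ is a spanning subgraph of $\widehat G$, we have $d_T(x,\omega^j)\ge x_j$. Conversely, descending chains give a path of length exactly $x_j$. From $x\in S_1$ go down through $x-1,x-2,\dots$ until reaching $S_0$, lowering every coordinate by one at each step. Inside $S_0$, follow the unique descending neighbours in direction $j$. So $(T,\widehat W)$ realizes $S$.

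The main obstacle is the uniqueness part of necessity. Its key step is the extension-to-a-leaf argument inside the $S_0$-tree, showing that the tree distance between elements of $S_0$ is bounded by their coordinate difference; the rest of the argument is routine.
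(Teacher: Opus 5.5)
Your proof is correct and follows essentially the same route as the paper: bipartite parity for i), Lemma~\ref{lem:V0tree} for ii), and for sufficiency the tree $\widehat G[S_0]$ (acyclic by the maximal-coordinate argument, connected by descending neighbours) with each $x\in S_1$ hung on $x-1$. Two of your steps are more careful than the paper's. Your extension-to-a-leaf claim, giving $d_T(u,v)\le\max_i|u_i-v_i|$ for $u,v\in S_0$, makes explicit the uniqueness step in ii) that the paper only asserts. Your distance check (lower bound from $\widehat G$, explicit descending path for the upper bound) needs only distances to $\widehat W$, rather than the isometry of $\widehat G[S_0]$ in $\widehat G$ that the paper claims.
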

\begin{proof} 
$\Rightarrow )$ Suppose first that $(T,W)$ is a realization of $S$, where $T$ is a tree.

Let $x,y\in S$ be such that $\max_{i\in [n]} |x_i-y_i|=1$, then $|x_{i_0}-y_{i_0}|=1$ for some $i_0\in [n]$. 
Then, $d(x,\omega^{i_0})=x_{i_0}$ and $d(y,\omega^{i_0})=y_{i_0}$. Since $x_{i_0}$ and $y_{i_0}$ have different parity, and using that the tree $T$ is a bipartite graph, $x$ and $y$ belong to different stable sets of $V(T)=S$. Hence, 
$d(\omega^j,x)=x_j$ and $d(\omega^j,y)=y_j$ have different parity for every $j\in [n]$.
Consequently, $x_j\not= y_j$ for every $j\in [n]$. Thus, 
$i)$ holds.

Now suppose that $x\in S_0$ and  $x_j>0$, for some $j\in [n]$. By  Lemma~\ref{lem:V0tree},  $S_0$ induces a tree $T_0$ in $T$ with all leaves belonging to $W$. Hence,  $y$ the neingbor of $x$ in the path in $T$ between $x$ and $\omega^j$ is the unique vertex in $S_0$ such that $d(y, \omega^j)=d(x,\omega^j)-1$. 
Thus, $y$ is the only element of $S_0$ such that $\max_{i\in [n]}|x_i-y_i|=1$ and $y_j=x_j-1$. Therefore, ii) holds.

$\Leftarrow )$ Suppose now that $S$ is a realizable set and conditions i) and ii) hold. Let $(\widehat{G},\widehat{W})$ be the canonical realization of $S$. We claim that $\widehat{G}[S_0]$ is a tree  such that $d_{\widehat{G}[S_0]}(x,y)=d_{\widehat{G}}(x,y)$, for every $x,y\in S_0$.
Indeed, $\widehat{G}[S_0]$ is connected, because by condition ii) there is a path in $\widehat{G}[S_0]$ from every vertex $x\in S_0$ to any vertex $\omega^i\in \widehat{W}$.
Besides, 
$\widehat{G}[S_0]$ has no cycles. Suppose on the contrary that $\widehat{G}[S_0]$ has a cycle $C$. Take $x\in V(C)$ with a coordinate that maximizes the coordinates of the vertices of $C$, that is, there exists $i_0\in [n]$, such that $x_{i_0}\ge z_j$ for every $z\in V(C)$ and every $j\in [n]$.
Let $y^1$ and $y^2$ be the neighbours of $x$ in $C$. Then, $x_{i_0}\ge y_{i_0}^1$ and $ x_{i_0}\ge y_{i_0}^2$, by the choice of $x$. Hence, by Condition i), 
$y_{i_0}^1=y_{i_0}^2= x_{i_0}-1$, but this is a contradiction, since Condition ii) holds. Therefore, $\widehat{G}[S_0]$ is a tree.

Condition ii) implies that from every $x\in S_0$ there is a shortest path in $\widehat{G}$ to every vertex in $\widehat{W}$ containing only vertices of $S_0$.
So, $d_{\widehat{G}[S_0]}(x,y)=d_{\widehat{G}}(x,y)$, if $x,y\in S_0$.

Now consider the graph $T$ with vertex set $V(T)=S$ and edge set 

$$E(T)=E(\widehat{G}[S_0])\cup \{yy': y\in S_1, y'=y-1\}.$$ 

We claim that $(T, \widehat{W})$ is a tree realizing $S$. 
Indeed, the set of edges of $T$ is well defined, since $y-1\in S$ when $y\in S_1$. 
Besides, $T[S_0]=\widehat{G}[S_0]$ is connected by Lemma~\ref{lem:V0tree} and, for every $y\in S_1$, there exists $k\ge 1$ such that  $y-1,\dots ,y-(k-1)\in S_1$ and $y-k\in S_0$. Hence,
$y,y-1,\dots ,y-k$ is a path in $T$ from $y$ to a vertex in $S_0$, implying that $T$ is connected. 
The size of $T$ is

$$|E(T)|=|E(\widehat{G}[S_0])|+|S_1|=|S_0|-1+|S_1|=|S|-1=|V(T)|-1.$$
Therefore, $T$ is a tree.

Let us prove now that $(T,\widehat{W})$ realizes $S$. 
Since $T[S_0]= \widehat{G}[S_0]$ is a tree,  then for every if $x,y\in S_0$

$$d_T(x,y)=d_{\widehat{G}[S_0]}(x,y)=d_{\widehat{G}}(x,y).$$ 
Hence, 
$r_T(x|\widehat{W})=r_{\widehat{G}}(x|\widehat{W})=x$, if $x\in S_0$.
If $x\in S_1$, then let $k\ge 1$ be the only natural number such that $x-k\in S_0$. 
By definition of $T$, $d_T(x,\omega^i)=d_T(x-k,\omega^i)+k=
d_{\widehat{G}}(x-k,\omega^i)+k=
(x_i-k)+k=x_i$.
Hence, $r_T(x|\widehat{W})=x$, if $x\in S_1$.
Consequently, $(T,\widehat{W})$ is a realization of $S$.
\end{proof}

The former theorem is similar to Lemma 3.1 of \cite{Feit2018}, however, we relax condition ii) of such lemma by showing that it is sufficient to consider it only for the vertices in $S_0$, instead of for all vertices of the graph.

Theorem~\ref{thm:charact} provides an explicit construction of a realization $(T,\widehat{W})$ of $S\subset \mathbb{Z}^n$, under the appropriate conditions, where $T$ is a tree. Recall that this tree satisfies $V(T)=S$ and $E(T)=E(\widehat{G}[S_0])\cup \{yy': y\in S_1, y'=y-1\}$. In the following result we show that this is the unique option to realize $S$ as a tree.

\begin{proposition}\label{prop:tree}
Let $S\subset \mathbb{Z}^n$ be a set realizable by a tree. Then, all the realizations of $S$ by trees are equivalent.
\end{proposition}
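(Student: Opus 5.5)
Following the convention of the paper, we regard every realization as a spanning subgraph of the canonical realization $\widehat{G}$ with resolving set $\widehat{W}$. The goal is to show that any tree $T$ with $(T,\widehat{W})$ realizing $S$ has exactly the edge set
$$E(\widehat{G}[S_0])\cup \{yy'\colon y\in S_1,\ y'=y-1\}$$
constructed in Theorem~\ref{thm:charact}. Since two spanning subgraphs of $\widehat{G}$ with the same edge set are equivalent realizations, this proves the statement. We will show that $E(T)$ contains this set. Both sets have $|S|-1$ elements: for the constructed set this was computed in the proof of Theorem~\ref{thm:charact}, and for $E(T)$ because $T$ is a tree. So containment gives equality.

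\textbf{Edges inside $S_0$.} By Lemma~\ref{lem:V0tree}, $T[S_0]$ is a tree, and $E(T[S_0])\subseteq E(\widehat{G}[S_0])$. Since $S$ is realizable by a tree, the necessary direction of Theorem~\ref{thm:charact} gives conditions i) and ii). The proof of the sufficient direction then shows that $\widehat{G}[S_0]$ is a tree. Now $T[S_0]$ is a spanning subgraph of the tree $\widehat{G}[S_0]$ on the same vertex set $S_0$ and is itself connected. Hence $|E(T[S_0])|=|S_0|-1=|E(\widehat{G}[S_0])|$, so $E(T[S_0])=E(\widehat{G}[S_0])$.

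\textbf{Edges at vertices of $S_1$.} Let $y\in S_1$. By Lemma~\ref{lem:V0W}, $y$ lies on no path of $T$ between two vertices of $W$, so $W$ lies in a single component of $T-y$. Let $v$ be the neighbour of $y$ in that component. For each $i$, the unique path from $y$ to $\omega^i$ passes through $v$, so $d(v,\omega^i)=y_i-1$ for all $i$. Thus $v=y-1$, because in the realization $v$ is identified with its coordinate vector. Hence $yy'\in E(T)$ with $y'=y-1$.

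The edges obtained in the two steps are pairwise distinct. The edges of the first step join two vertices of $S_0$. An edge $y(y-1)$ of the second step has $y\in S_1$, and it cannot arise from two different vertices $y,y'\in S_1$: if $y'=y-1$ and $y=y'-1$ simultaneously, we get a contradiction. So $E(T)$ contains the whole constructed edge set, and the cardinality count finishes the proof.

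The main obstacle is the first step, namely reusing the argument that $\widehat{G}[S_0]$ is acyclic, which relies on conditions i) and ii). This is justified here because tree-realizability yields those conditions through Theorem~\ref{thm:charact}.
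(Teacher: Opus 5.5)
Your proof is correct, but it proves the inclusion in the opposite direction from the paper.

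The paper fixes the tree $T$ built in Theorem~\ref{thm:charact} and an arbitrary tree realization $T'$, and shows $E(T')\subseteq E(T)$:
\begin{enumerate}[(a)]
\item An edge of $T'$ inside $S_0$ lies in $E(\widehat{G}[S_0])\subseteq E(T)$ by construction.
\item For an edge $xy$ of $T'$ meeting $S_1$, the paper uses condition i), that adjacent vertices differ in every coordinate by bipartiteness. If $x_1=y_1-1$ but $x_j=y_j+1$ for some $j$, then $x$ and $y$ would both lie on the $\omega^1$--$\omega^j$ path of $T'$, contradicting Lemma~\ref{lem:V0W}. Hence $x=y-1$.
\end{enumerate}

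You instead show that every tree realization is forced to contain each constructed edge:
\begin{enumerate}[(a)]
\item For $y\in S_1$, the component argument (essentially the second half of the proof of Lemma~\ref{lem:V0W}) pins $y$ to $y-1$. This step needs neither parity nor condition i).
\item For $S_0$, you combine Lemma~\ref{lem:V0tree} with the acyclicity of $\widehat{G}[S_0]$, so the connected spanning subgraph $T[S_0]$ must be all of $\widehat{G}[S_0]$.
\end{enumerate}

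The trade-offs are as follows. In the paper, the $S_0$-edges cost nothing. Your version must explicitly reuse the acyclicity argument from inside the proof of Theorem~\ref{thm:charact}, whereas the paper uses that fact only implicitly, through the edge count $|E(T)|=|S|-1$. In return, your route makes the rigidity transparent: the $S_0$-part is determined because $\widehat{G}[S_0]$ is already a tree, and each vertex of $S_1$ has a unique forced attachment toward $W$.
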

\begin{proof}
By Theorem~\ref{thm:charact}, the subtree $T$ of the canonical realization $\widehat{G}$ with $V(T)=S$ and $E(T)=E(\widehat{G}[S_0])\cup \{yy': y\in S_1, y'=y-1\})$, satisfies that $(T,\widehat{W})$ realizes $S$, where $\widehat{W}=\{x\in S\colon x_i=0, \text{ for some }i\in [n]\}$.

Let $(T',\widehat{W})$ be another realization of $S$, satisfying that $T'$ is a tree. In order to prove that both $T$ and $T'$ are equivalent realizations of $S$, we just need to show that $E(T)=E(T')$. 

Let $xy\in E(T')$ be an edge of $T'$, then $xy\in E(\widehat{G})$. If $x,y\in S_0$, then $xy\in E(\widehat{G}[S_0])\subseteq E(T)$. Otherwise, $\{x,y\}\cap S_1\neq \emptyset$ and, in such a case, we know that $\max_{i\in [n]}|x_i-y_i|=1$ and, by Theorem~\ref{thm:charact}, $x_j\neq y_j$ for every $j\in [n]$. Suppose without loss of generality that $x_1=y_1-1$, therefore $d_{T'}(x,\omega^1)=x_1=y_1-1=d_{T'}(y,\omega^1)-1$ and $x$ is in the path in $T'$ between $y$ and $\omega^1$. Similarly, if $x_j=y_j+1$, for some $j\geq 2$, then $y$ is in the path in $T'$ between $x$ and $\omega^j$. Then, there is a path in $T'$ between $\omega^1$ and $\omega^j$ containing both $x$ and $y$, which contradicts Lemma~\ref{lem:V0W}. This means that, if $x_1=y_1-1$ then $x=y-1$, $y\in S_1$ and $xy\in \{yy': y\in S_1, y'=y-1\}\subseteq E(T)$.

Therefore, we have obtained that $E(T')\subseteq E(T)$. Finally, $\vert S\vert-1=\vert V(T')\vert-1=\vert E(T')\vert \leq \vert E(T)\vert =\vert V(T')\vert-1 =\vert S\vert-1$ gives that $E(T')=E(T)$, as desired. 
\end{proof}

The above proposition essentially provides the same result as Theorem 3.2 of~\cite{Feit2018}, however our proof, which uses the specific construction shown in the Theorem~\ref{thm:uniquely}, is simpler and shorter.

Although all the realizations of a subset $S$ by a tree are equivalent, if $S$ is realizable by a tree, it does not imply that  $S$ is uniquely realizable, that is, realizations of $S$ by graphs that are not trees may exist, as can be seen in Example~\ref{ex:tree}.

\begin{example}\label{ex:tree}
    It is easy to check that the set $S=\{ (0,3), (1,2), (2,1), (2,3),$ $(3,0),(3,2)\}$ is realizable, that is, it satisfies conditions of Theorem~\ref{theo:realizable}, and moreover, is is realizable by a tree, because it also satisfies conditions of Theorem~\ref{thm:charact}.   Figure~\ref{fig:realization_tree} shows the tree realization of the set $S$, given by Proposition~\ref{prop:unique_realizable_tree}. However, this realization is not unique and a non-equivalent realization is shown in Figure~\ref{fig:realization_non_tree}.

\end{example}

\begin{figure}[ht]
     \centering
     \begin{subfigure}[t]{.26\textwidth}
     \centering
         \includegraphics[width=\textwidth]{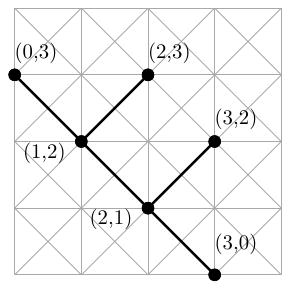}
         \caption{Tree realization}
         \label{fig:realization_tree}
     \end{subfigure}
     \hspace{2cm}
     \begin{subfigure}[t]{.26\textwidth}
     \centering
         \includegraphics[width=\textwidth]{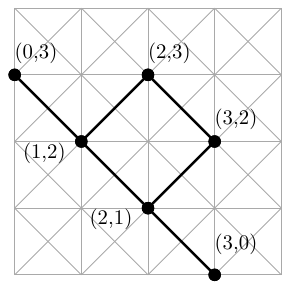}
         \caption{Non-tree realization}
         \label{fig:realization_non_tree}
     \end{subfigure}
        \caption{Example of two realizations, one of them is a tree.}
        \label{fig:realizations}
\end{figure}

Question 1 posed in~\cite{Feit2018} asks about the uniqueness of the realization of sets realizable by a tree. In the following result we provide a characterization of such sets, that is, the sets that are uniquely realizable and its realization is a tree.

\begin{proposition}\label{prop:unique_realizable_tree}
If $S\subset \mathbb{Z}^n$ is realizable by a tree then, it is uniquely realizable if and only if $\max_{i\in [n]} |x_i-y_i|>1$ for every pair of different vertices $x,y\in S_0^*$, where $S_0^*=\{x\in S_0: x+1\in S\}$.
\end{proposition}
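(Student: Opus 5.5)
Throughout, let $(T,\widehat{W})$ be the tree realization constructed in Theorem~\ref{thm:charact}, with $E(T)=E(\widehat{G}[S_0])\cup\{yy'\colon y\in S_1,\ y'=y-1\}$. By Proposition~\ref{prop:tree}, every tree realization is equivalent to it. Since $T$ is a spanning subgraph of $\widehat{G}$, Proposition~\ref{pro:add_edge} gives the following reduction: $S$ is uniquely realizable if and only if $E(\widehat{G})=E(T)$. Indeed, if some edge of $\widehat{G}$ is missing from $T$, adding it yields a non-equivalent realization; conversely, if $E(\widehat G)=E(T)$, every realization lies between $T$ and $\widehat G$ and so equals $T$.

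Next I would describe the edges of $\widehat{G}$ not in $T$, which I call extra edges. If $xy\in E(\widehat{G})$, condition i) of Theorem~\ref{thm:charact} gives $x_k-y_k=\pm1$ for every $k$. If all the signs agree, then $x=y\pm1$, and this edge is in $T$ (the element with the larger entries is in $S_1$). Edges with both ends in $S_0$ are also in $T$. So an extra edge is an edge $xy$ of $\widehat G$ whose difference vector has mixed signs and with at least one end in $S_1$.

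($\Leftarrow$ of the claimed failure, i.e.\ "only if" of the statement.) Suppose $x\neq y$ in $S_0^*$ satisfy $\max_i|x_i-y_i|=1$. Then $x+1,y+1\in S$ are at the same difference, so $(x+1)(y+1)\in E(\widehat G)$, and both lie in $S_1$. This edge is not of the form $\{z,z-1\}$: for example, $y+1=x$ would put $x\in S_1$, contradicting $x\in S_0$. Hence it is an extra edge and $S$ is not uniquely realizable.

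(Converse.) Take an extra edge $xy$. While both ends lie in $S_1$, replace them by $x-1,y-1$. These are still in $S$ and still adjacent in $\widehat G$ with the same mixed difference vector, so the procedure must stop.
\begin{itemize}
\item If it stops at a pair $x,y$ both in $S_0$ after at least one step, then $x+1,y+1\in S$. So $x,y\in S_0^*$ and $\max_i|x_i-y_i|=1$, which is what we want.
\item If no step was possible at the start, the original edge already has one end in $S_1$. In either situation, the remaining case is $x\in S_0$, $y\in S_1$ with mixed signs, and I must rule it out.
\end{itemize}

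This last case is the main obstacle; I would handle it with the path structure from Lemma~\ref{lem:V0W}. Pick $i$ with $x_i=y_i+1$, which exists because the signs are mixed; note $x_i\geq1$. I then want $j\neq i$ such that $x$ lies on the $T$-path between $\omega^i$ and $\omega^j$.
\begin{itemize}
\item If $x=\omega^k$ for some $k$, then $k\neq i$ and I take $j=k$.
\item Otherwise, the argument in the proof of Lemma~\ref{lem:V0W} shows that the vertices of $W$ are not all in one component of $T-x$. So I can choose $\omega^j$ in a component different from that of $\omega^i$.
\end{itemize}
Then $d(\omega^i,\omega^j)=x_i+x_j$. Applying the triangle inequality through $y-1\in S$ gives $x_i+x_j\le (y_i-1)+(y_j-1)$. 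But $y_i=x_i-1$ and $y_j\le x_j+1$, so $y_i+y_j-2\le x_i+x_j-2$, a contradiction. Hence extra edges exist exactly when two distinct vertices of $S_0^*$ are adjacent in $\widehat G$, which proves the proposition.
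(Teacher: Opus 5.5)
Your proof is correct. Its ``only if'' half is the paper's argument in a different form. The paper exhibits the $4$-cycle $x,\,x+1,\,y+1,\,y$ in $\widehat{G}$, while you observe that the edge $(x+1)(y+1)$ of that cycle lies outside $T$.

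The ``if'' half follows a genuinely different route.

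\emph{The paper's route.} It proves that $\widehat G$ is a tree. It takes an edge $x'y'$ with an end in $S_1$ and pushes the two endpoints \emph{independently} down their chains to $x=x'-r$ and $y=y'-s$ in $S_0$, with $r\ge s$. It then uses that $\widehat{G}[S_0]$ is a tree with all leaves in $W$. Extending the $x$--$y$ path beyond $y$ to a leaf $\omega^{j_0}$ gives $x_{j_0}-y_{j_0}=d_{\widehat G}(x,y)=d$, hence $x'_{j_0}-y'_{j_0}=d+(r-s)>1$.

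\emph{Your route.} You first use condition i) of Theorem~\ref{thm:charact} to identify the edges of $\widehat G$ outside $T$. They are the ones with a mixed-sign $\pm1$ difference vector and an end in $S_1$. You then descend both endpoints \emph{in lockstep}, which preserves adjacency. Either you land on an adjacent pair in $S_0^*$, contradicting the hypothesis directly, or you reach an adjacent pair with $x\in S_0$, $y\in S_1$. You exclude the latter by combining Lemma~\ref{lem:V0W}, which places $x$ on the $T$-path between $\omega^i$ and some $\omega^j$, with the triangle inequality through $y-1$.

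\emph{Comparison.} The paper's version is shorter, since one inequality handles every case. However, it leaves two points tacit:
\begin{enumerate}
\item where the hypothesis on $S_0^*$ enters; it is hidden in the claim $d\ge 2$ when $x',y'\in S_1$;
\item that the bounds $d\ge1$ and $d\ge2$ fail for the tree edges $\{z,z-1\}$, where $x'-r=y'-s$ and $d=0$, so the edge must be chosen outside $T$.
\end{enumerate}
Your decomposition makes both points explicit.

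One small wording fix. When $E(\widehat G)=E(T)$, a realization is not known a priori to contain $T$. Rather, it is a connected spanning subgraph of the tree $\widehat G=T$, and hence equals it.
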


\begin{proof}

$\Rightarrow )$
Assume first $S$ is uniquely realizable by a tree, then it is clear that the canonical realization $\widehat{G}$ is a tree. Suppose, on the contrary, that there exist different vertices $x,y\in S_0^*$ such that 
$\max_{i\in [n]} |x_i-y_j|=1$. Then, $x'=x+1$ and $y'=y+1$ belong to $S$ and $x'y'$ is an edge of $\widehat{G}$, since 

$$\max_{i\in [n]} |x'_i-y_i'|=\max_{i\in [n]} |(x_i+1)-(y_i+1)|=\max_{i\in [n]} |x_i-y_j|=1.$$ 
Hence, $xx'y'yx$ is a cycle in $\widehat{G}$, which contradicts that $\widehat{G}$ is a tree.

$\Leftarrow )$ For the converse, it is enough to check that the canonical realization $\widehat{G}$ is a tree. 
If $S$ is realizable by a tree, then $\widehat{G}$ has no cycles with all vertices in $S_0$,  since we have seen in the proof of Theorem~\ref{thm:charact} that $\widehat{G}[S_0]$ is a tree. Hence, the existence of a cycle in $\widehat{G}$ implies the existence of an edge $x'y'$ in $\widehat{G}$ with at least one of the vertices $x'$ or $y'$ belonging to $S_1$. 
Thus, we may assume without loss of generality that there exist integers $r$ and $s$ with $r\ge s\ge 0$ and $r\ge 1$, such that $x=x'-r\in S_0$  and $y=y'-s\in S_0$. 
If $d_{\widehat{G}}(x,y)=d$, then there exists $j_0\in [n]$ such that
$x_{j_0}-y_{j_0}=d$, because $\widehat{G}[S_0]$ is a tree with all leaves in $W$.
Hence, $x'_{j_0}-y'_{j_0}=
(x_{j_0}+r)-(y_{j_0}+s)=
x_{j_0}-y_{j_0}+(r-s)=d+(r-s)$.

Therefore, $\max_{i\in [n]}|x_i'-y_i'|\ge d+(r-s)>1$, since either $d\ge 2$, when $x',y'\in S_1$, or $d\ge 1$, $r\ge 1$ and $s=0$, when $x'\in S_1$ and $y'\in S_0$, which contradicts that $x'y'$ is an edge of $\widehat{G}$.
\end{proof}

In the following example we show a set $S$ which is uniquely realizable and its realization is a tree.

\begin{example}
    The set $S=\{ (0,4), (1,3), (2,2), (2,4), (3,1), (4,0), (4,2)\}$ satisfies conditions of Proposition~\ref{prop:unique_realizable_tree}, where $S_0^*=\{(1,3), (3,1) \}$ and, therefore, its unique realization, which is the canonical one, is a tree. We show it in Figure~\ref{fig:unique_realization_tree}. 
\end{example}

\begin{figure}[ht]
    \centering
         \includegraphics[width=0.28\textwidth]{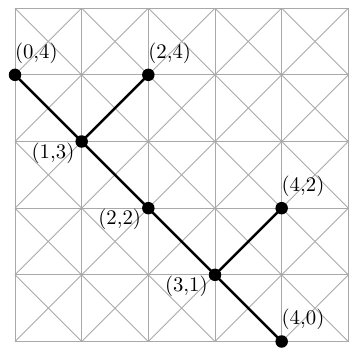}
         \caption{$S_0^*=\{(1,3), (3,1) \}$}
         \label{fig:unique_realization_tree}
\end{figure}

\bibliographystyle{abbrv}
\bibliography{bibfile}

\end{document}